\renewcommand{\PrintDOI}[1]{%
    \href{http://dx.doi.org/#1}{{\tt DOI:#1}}%
}
\renewcommand{\eprint}[1]{#1}
\theoremstyle{plain}
\newtheorem{thm}[]{Theorem}
\newtheorem{prop}[]{Proposition}
\newtheorem{lem}[prop]{Lemma}
\newtheorem{cor}[prop]{Corollary}
\theoremstyle{definition}
\newtheorem{dfn}[prop]{Definition}
\theoremstyle{remark}
\newtheorem{rmk}[prop]{Remark}
\newtheorem{example}[prop]{Example}
\newcommand{\ensemble}[1]{\left\{ #1 \right\}}
\newcommand{\suchthat}{\mid}
\newcommand{\norm}[1]{\left\| #1 \right\|}
\newcommand{\absolute}[1]{\left| #1 \right|}
\newcommand{\cptops}{\mathcal{K}}
\newcommand{\Z}{\mathbb{Z}}
\newcommand{\C}{\mathbb{C}}
\newcommand{\R}{\mathbb{R}}
\newcommand{\T}{\mathbb{T}}
\newcommand{\fin}{\text{fin}}
\newcommand{\even}{\text{even}}
\newcommand{\smooth}[1]{\mathcal{#1}}
\newcommand{\U}{\mathrm{U}}
\newcommand{\Adj}{\mathrm{Ad}}
\newcommand{\tildAdj}{\widetilde{\Adj}}
\newcommand{\Double}{\mathsf{D}}
\newcommand{\topol}{\text{top}}
\newcommand{\HMEnd}[1]{\mathcal{L}\left ( #1 \right )}
\newcommand{\MA}{\mathcal{M}}
\newcommand{\bdl}[1]{\mathcal{#1}}
\newcommand{\bimod}[1]{\mathcal{#1}}
\DeclareMathOperator{\maxten}{\otimes_{\max}}
\DeclareMathOperator{\Res}{Res}
\DeclareMathOperator{\Ind}{Ind}
\DeclareMathOperator{\dom}{dom}
\DeclareMathOperator{\ev}{ev}
\DeclareMathOperator{\Id}{Id}
\DeclareMathOperator{\mintensor}{\otimes_{\min}}
\DeclareMathOperator{\KK}{\mathit{KK}}
\DeclareMathOperator{\RKK}{\mathcal{R}\mathit{KK}}
\begin{document}

\title{Deformation of algebras associated with group cocycles}
\author{Makoto Yamashita}
\date{minor changes August 3, 2023; revised June 6, 2013; October 15, 2011} 
\address{Dipartimento di Matematica,
Universit\`{a} degli Studi di Roma ``Tor Vergata''\\
Via della Ricerca Scientifica 1, 00133 Rome, Italy}
\curraddr{Department of Mathematics, University of Oslo}
\email{makotoy@math.uio.no}

\keywords{deformation, Fell bundle, K-theory}
\subjclass[2020]{Primary 46L80; Secondary 46L65, 58B34}

\begin{abstract}
 We study deformation of algebras with coaction symmetry of reduced algebra of discrete groups, where the deformation parameter is given continuous family of group $2$-cocycles.  When the group satisfies the Baum--Connes conjecture with coefficients, we obtain isomorphism of K-groups of the deformed algebras.  This extends both the $\theta$-deformation of Rieffel on $\T^n$-actions, and a recent result of Echterhoff, L\"{u}ck, Phillips, and Walters on the K-groups on the twisted group algebras.
\end{abstract}

\maketitle

\section{Introduction}
\label{sect:intro}

Deformation of algebras has been an important technique to produce interesting examples in the study of operator algebras and noncommutative geometry.  One of the most famous examples is the noncommutative torus algebra $C(\T^2_\theta)$, which is universally generated by two unitaries $u$ and $v$ satisfying $u v = e^{i \theta} v u$ for a given real parameter $\theta$.  Since the case of $\theta \in 2 \pi \Z$ gives the algebra of the algebra of continuous functions on the usual $2$-torus, one may think of $C(\T^2_\theta)$ as an algebra representing a deformed object of the $2$-torus.  This example inspired many interesting ideas in the early development of noncommutative geometry by Connes~\cite{MR823176} and others.

Inspired by the theory of deformation quantization, Rieffel defined the notion of $\theta$-deformation as a generalization of the noncommutative torus~\cite{MR1002830}.  It takes a C$^*$-algebra with an action of $\R^n$ as the input, and the deformation parameter is given by a skewsymmetric form on $\R^n$.  He showed that the $\theta$-deformations have the same K-groups as the original algebras~\cite{MR1237992}, extending the case of the noncommutative torus by Pimsner and Voiculescu~\cite{MR587369}.

From another point of view, the noncommutative torus can also be considered as a twisted group algebra of the discrete group $\Z^2$.  In general, given any discrete group $\Gamma$ and a $\U(1)$-valued $2$-cocycle $\omega$ on $\Gamma$, one may consider the projective unitary representations of $\Gamma$ with respect to $\omega$, which leads to the notion of the (maximal) twisted group algebra $C^*_{\omega}(\Gamma)$.  Similarly, the regular $\omega$-representation of $\Gamma$ generates the C$^*$-algebra $C^*_{r, \omega}(\Gamma)$ which generalizes the usual reduced group algebra $C^*_r(\Gamma)$.  The C$^*$-algebraic properties of such algebras were extensively studied by Packer and Raeburn in the early '90s~\citelist{\cite{MR1002543}\cite{MR1066817}}.

Concerning the latter framework, Echterhoff, L\"{u}ck, Phillips, and Walters recently proved a K-theory isomorphism result for the reduced twisted group algebras when the group satisfies the Baum--Connes conjecture with the coefficients~\cite{MR2608195}.  We note that Mathai~\cite{MR2218025} also proved the K-theory invariance under twisting by such cocycles for a slightly different class of groups, building on Lafforgue's Banach algebraic approach~\cite{MR1914617} to the Baum--Connes conjecture.

The motivation of this paper is to unify the above two K-theory isomorphism results.  We thus consider C$^*$-algebras admitting coactions of the compact quantum group $C^*_r(\Gamma)$, and deform them by the $\U(1)$-valued $2$-cocycles on $\Gamma$.  This notion is equivalent to that of cross-sectional algebras of Fell bundles~\cite{MR1488064}, but the viewpoint from compact quantum group theory is also important for conceptual understanding.  The deformation of function algebras of compact groups was utilized by Wassermann in his study of ergodic actions of compact groups~\cite{MR990110}.  In the more general context of compact quantum group coactions on arbitrary operator algebras, such construction was considered by De Rijdt and Vander Vennet~\cite{MR2664313}.  The benefit of concentrating on the quantum group $C^*_r(\Gamma)$ is that, any continuous family of $\U(1)$-valued cocycles can be captured by $\R$-valued $2$-cocycles, as seen by a standard long exact sequence argument for the coefficients $\Z$, $\R$, and $\U(1)$.

Our main result (Theorem~\ref{thm:K-grp-isom-for-BC-coeff}) states that, when $\Gamma$ satisfies the Baum--Connes conjecture with coefficients and the cocycle comes from an $\R$-valued $2$-cocycle, the K-groups of the deformed algebra are isomorphic to those of the original algebra.  We also obtain an isospectral deformation of spectral triples when the `Dirac operator' is equivariant with respect to the coaction of $C^*(\Gamma)$, and the character of the deformed spectral triple can be identified (Theorem~\ref{thm:char-spc-triple-invar}) with the original one modulo the above natural isomorphism of the K-theory.

Finally, we remark that there are several similar schemes of deformation of operator algebras which do not fall into our approach.  The deformation of Fell bundles due to Abadie--Exel~\cite{MR1856986} seems to be closest to ours.  The $q$-deformation compact quantum groups are also analogues of $2$-cocycle deformation.  There is a similar K-theoretic invariance result by Neshveyev and Tuset~\cite{MR2914062} for such quantum groups and their homogeneous spaces.

\paragraph{Acknowledgments}  This paper was written during the author's stay at Institut for Matematiske Fag, K{\o}benhavns Universitet.  He would like to thank them for their support and hospitality.  He is also grateful to Ryszard Nest, Takeshi Katsura, Reiji Tomatsu, Narutaka Ozawa, and Siegfried Echterhoff for stimulating discussions and fruitful comments.

\section{Preliminaries}
\label{sec:prelim}

When $A$ and $B$ are C$^*$-algebras, $A \otimes B$ denotes their minimal tensor product unless otherwise stated.  Likewise when $H$ and K are Hilbert spaces, $H \otimes K$ denotes their tensor product as a Hilbert space.  When $X$ is a right Hilbert C$^*$-module over a C$^*$-algebra $A$, we let $\HMEnd{X}$ denote the algebra of adjointable endomorphisms of $X$.  When $H$ is a Hilbert space, we let $H \otimes X$ denote the tensor product Hilbert C$^*$-module over $A$.

When $A$ is a C$^*$-algebra, we let $\MA(A)$ denote its multiplier algebra (which can be defined as $\HMEnd{A_A}$).

The crossed products with respect to (co)actions of locally compact quantum groups on C$^*$-algebras are understood to be the reduced ones unless otherwise specified.  Our convention is that, when $\alpha$ is an action of a discrete group $\Gamma$ on a C$^*$-algebra $A \subset B(H)$, the reduced crossed product $A \rtimes \Gamma$ is the C$^*$-algebra on $\ell^2(\Gamma) \otimes H$ generated by the operators $\lambda_g \otimes \Id_H$ for $g \in \Gamma$ and the copy of $A$ represented as the operators
\[
\tilde{\alpha}(a)\colon \delta_g \otimes \xi \mapsto \delta_g \otimes \alpha_g(a) \xi \quad (g \in \Gamma, \xi \in H, a \in A).
\]

\subsection{Group cocycles}

Let $\Gamma$ be a discrete group.  When $(G, +)$ is a commutative group, a \textit{$G$-valued $2$-cocycle} $\omega$ on $\Gamma$ is a map $\omega\colon \Gamma \times \Gamma \rightarrow G$ satisfying the cocycle identity
\begin{equation}
\label{eq:cocycle-ident-omega}
\omega(g_0, g_1) + \omega(g_0 g_1, g_2) = \omega(g_1, g_2) + \omega(g_0, g_1 g_2).
\end{equation}
A $2$-cocycle $\omega$ is said to be \textit{normalized} when it satisfies
\begin{align}
  \label{eq:cocycle-normalization}
  \omega(g, e) &= \omega(e, g) = 1,&
  \omega(g, g^{-1}) &= 1
\end{align}
for any $g \in \Gamma$.  Two cocycles $\omega$ and $\omega'$ are said to be \textit{cohomologous} when there exists a map $\psi\colon \Gamma \rightarrow \U(1)$ satisfying
\begin{equation}\label{eq:cohom-cocycles}
\psi(g) \psi(h) \omega(g, h) \overline{\psi(g h)} = \omega'(g, h).
\end{equation}
Any $2$-cocycle satisfying~\eqref{eq:cocycle-ident-omega} is cohomologous to a normalized one.

In this paper we only consider the cases $G = \R$ and $G = \U(1)$ as the target group of cocycles.  Note that when $\omega$ is an $\R$-valued $2$-cocycle, we obtain a $\U(1)$-valued cocycle $e^{i \omega}$ by putting $e^{i \omega}(g, h) = e^{i \omega(g, h)}$.

When $\omega$ is a $\U(1)$-valued $2$-cocycle on $\Gamma$, \textit{the twisted reduced group C$^*$-algebra} $C^*_{r, \omega}(\Gamma)$ is the C$^*$-algebraic span of the operators $\lambda^{(\omega)}_g \in B(\ell^2 \Gamma)$ for $g \in \Gamma$ defined by
\[
\lambda^{(\omega)}_g \delta_h = \omega(g, h) \delta_{g h}.
\]

Given $\Gamma$ and $\omega$ as above, we can consider the fundamental unitary $W = \sum_g \delta_g \otimes \lambda_g$ and another unitary operator $\sum_{g, h} \omega(g, h) \delta_g \otimes \delta_h$ representing $\omega$, both represented on $\ell^2(\Gamma)^{\otimes 2}$.  Then the unitary operator
\begin{equation}
  \label{eq:reg-omega-rep-unitary}
  W^{(\omega)} = W \omega\colon \delta_h \otimes \delta_k \mapsto \beta(h, k) \delta_h \otimes \delta_{h k}
\end{equation}
in the von Neumann algebra $\ell^\infty(\Gamma) \otimes B(\ell^2(\Gamma))$ is called the \textit{regular $\omega$-representation} unitary.  The algebra $C^*_{r, \omega}(\Gamma)$ can be also defined as the C$^*$-algebraic span of the operators $\phi \otimes \iota(W^{(\omega)})$ for $\phi \in \ell^1(\Gamma) = \ell^\infty(\Gamma)_*$.

If $\omega$ is normalized, the generators $(\lambda^{(\omega)}_g)_{g \in \Gamma}$ satisfy the relations
\begin{align*}
\lambda^{(\omega)}_g \lambda^{(\omega)}_h &= \omega(g, h) \lambda^{(\omega)}_{g h}, &
(\lambda^{(\omega)}_g)^* &= \lambda^{(\omega)}_{g^{-1}}.
\end{align*}
From this formula we see that the vector state for $\delta_e$ is tracial.  This trace is called the \textit{standard trace} $\tau$ on $C^*_{r, \omega}(\Gamma)$.

When $\omega$ and $\omega'$ are cohomologous as in~\eqref{eq:cohom-cocycles}, the algebras $C^*_{r, \omega}(\Gamma)$ and $C^*_{r, \omega'}(\Gamma)$ are isomorphic via the map $\lambda^{(\omega)}_g \mapsto \overline{\psi(g)}\lambda^{(\omega')}_g$.   In the following, we always assume that $\omega$ is normalized.

We let $\overline{\omega}$ denote the complex conjugate cocycle $\overline{\omega}(g, h) = \overline{\omega(g, h)}$.  Then the twisted algebra $C^*_{r, \overline{\omega}}(\Gamma)$ is antiisomorphic to $C^*_{r, \omega}(\Gamma)$ as follows.  Using~\eqref{eq:cocycle-normalization}, we obtain the equality
\begin{equation*}
\begin{split}
\omega(h^{-1}, g^{-1}) \omega(g, h) \omega(g h, h^{-1} g^{-1}) &= \omega(h^{-1}, g^{-1}) \omega(h, h^{-1} g^{-1}) \omega(g, g^{-1})\\
 &= \omega(h, h^{-1}) \omega(g, g^{-1}) = 1,
\end{split}
\end{equation*}
which shows $\omega(h^{-1}, g^{-1}) = \overline{\omega}(g, h)$.  From this it is easy to see that the map $\lambda^{(\omega)}_g \mapsto \lambda^{(\overline{\omega})}_{g^{-1}}$ defines an antiisomorphism from $C^*_{r, \omega}(\Gamma)$ to $C^*_{r, \overline{\omega}}(\Gamma)$.

\subsection{Crossed product presentation of twisted group algebras}

The reduced group algebra $C^*_r(\Gamma)$ admits the structure of function algebra of a compact quantum group by the coproduct map $\delta(\lambda_g) = \lambda_g \otimes \lambda_g$.

Suppose that $\alpha$ and $\beta$ are $\U(1)$-valued $2$-cocycles on $\Gamma$.  Then, with the unitary regular $\beta$-representation unitary~\eqref{eq:reg-omega-rep-unitary}, we have
\[
W^{(\beta)} (\lambda^{(\alpha \cdot \beta)}_g \otimes \Id_{\ell^2(\Gamma)}) (W^{(\beta)})^* = \lambda^{(\alpha)}_g \otimes \lambda^{(\beta)}_g
\]
for any $g \in \Gamma$.  This way we obtain a C$^*$-algebra homomorphism
\[
C^*_{r, \alpha \cdot \beta}(\Gamma) \rightarrow C^*_{r, \alpha}(\Gamma) \otimes C^*_{r, \beta}(\Gamma), \quad \lambda^{(\alpha \cdot \beta)}_g \mapsto \lambda^{(\alpha)}_g \otimes \lambda^{(\beta)}_g.
\]
When either of $\alpha$ or $\beta$ is trivial, we obtain the coactions
\begin{align*}
\delta^{(\omega)}_l &\colon C^*_{r, \omega}(\Gamma) \rightarrow C^*_{r}(\Gamma) \otimes C^*_{r, \omega}(\Gamma),&
\delta^{(\omega)}_r &\colon C^*_{r, \omega}(\Gamma) \rightarrow C^*_{r,\omega}(\Gamma) \otimes C^*_{r}(\Gamma)
\end{align*}
of $C^*_r(\Gamma)$ on the twisted group algebra $C^*_{r, \omega}(\Gamma)$.  Note that these two coactions carry the same information.

The crossed product algebra $C^*_{r, \omega}(\Gamma) \rtimes_{\delta_r} C_0(\Gamma)$ with respect to the coaction $\delta^{(\omega)}_r$ is the C$^*$-algebra generated by $\delta^{(\omega)}_r(C^*_{r, \omega}(\Gamma))$ and $1 \otimes C_0(\Gamma)$ in $B(\ell^2(\Gamma) \otimes \ell^2(\Gamma))$.  This crossed product is actually isomorphic to the compact operator algebra
\[
\cptops(\ell^2(\Gamma)) \simeq \Gamma \ltimes_{\lambda} C_0(\Gamma) \simeq C^*_r(\Gamma) \rtimes_{\delta_r} C_0(\Gamma),
\]
where $\lambda$ in the middle denotes the left translation action of $\Gamma$ on $C_0(\Gamma)$.  Concretely, this isomorphism is given by the map
\begin{equation}
\label{eq:isom-tw-cross-c0-untw-cross-c0}
C^*_{r, \omega}(\Gamma) \rtimes_{\delta^{(\omega)}_r} C_0(\Gamma) \rightarrow C^*_r(\Gamma) \rtimes_{\delta_r} C_0(\Gamma),
\quad \lambda^{(\omega)}_g \delta_h \mapsto \omega(g, h) \lambda_g \delta_h.
\end{equation}
The crossed product $C^*_{r, \omega}(\Gamma) \rtimes_{\delta_r} C_0(\Gamma)$ admits the dual action $\hat{\delta}^{(\omega)}_r$ of $\Gamma$ defined by
\begin{equation*}
(\hat{\delta}^{(\omega)}_r)_k(\lambda^{(\omega)}_g \delta_h) = \lambda^{(\omega)}_g \delta_{h k^{-1}} \quad (g, h, k \in \Gamma).
\end{equation*}
If we regard $\hat{\delta}^{(\omega)}_r$ as an action of $\Gamma$ on $C^*_r(\Gamma) \rtimes_{\delta_r} C_0(\Gamma)$ via the isomorphism~\eqref{eq:isom-tw-cross-c0-untw-cross-c0}, the dual coaction can be expressed as
\begin{equation}
\label{eq:omega-delta-hat-on-untw-crossed-prod}
(\hat{\delta}^{(\omega)}_r)_k(\lambda_g \delta_h) = \overline{\omega(g, h)} \omega(g, h k^{-1}) \lambda_g \delta_{h k^{-1}}.
\end{equation}
By the Takesaki--Takai duality, the crossed product $\cptops(\ell^2(\Gamma)) \rtimes_{\hat{\delta}^{(\omega)}_r} \Gamma$ is strongly Morita equivalent to $C^*_{r, \omega}(\Gamma)$.

\subsection{Coaction of quantum groups and braided tensor products}

Suppose that $A$ is a $C^*_r(\Gamma)$-C$^*$-algebra.  Thus, $A$ admits a coaction $\alpha$ of $C^*_r(\Gamma)$ given by an injective homomorphism
\[
\alpha \colon A \rightarrow C^*_r(\Gamma) \otimes A
\]
 which satisfies the comultiplicativity $\iota \otimes \alpha \circ \alpha = \delta \otimes \iota \circ \alpha$, and the \textit{the cancellation property} (also called \textit{the continuity} of $\alpha$) meaning that $(C^*_r(\Gamma) \otimes 1) \alpha(A)$ is dense in $C^*_r(\Gamma) \otimes A$.  We write the coaction as $\alpha(x) = \sum_g \lambda_g \otimes \alpha^{(g)}(x)$.  Thus $x = \alpha^{(g)}(x)$ is equivalent to $\alpha(x) = \lambda_g \otimes x$.  Note that linear span $A_\fin$ of such elements, the \textit{elements of finite spectrum}, are dense in $A$.  This fact will be frequently utilized later to verify the images of various homomorphisms.

Suppose that $A$ is represented on a Hilbert space $H$.  Then a unitary $X$ in $\MA(C^*_r(\Gamma) \otimes \cptops(H))$ is said to be a \textit{covariant representation} for $\alpha$ if it satisfies
\[
\delta \otimes \iota (X) = X_{1 3} X_{2 3}, \quad X^* (1 \otimes a) X = \alpha(a).
\]

By analogy with the case of $\Gamma = \Z^2$~\cite{MR2738561}*{Section~3}, we would like to consider `the diagonal coaction' $\alpha \otimes \delta^{(\omega)}_l$ of $C^*_r(\Gamma)$ on $A \otimes C^*_{r, \omega}(\Gamma)$.  Nonetheless, a naive attempt
\[
A \otimes C^*_{r, \omega}(\Gamma) \rightarrow C^*_r(\Gamma) \otimes A \otimes C^*_{r, \omega}(\Gamma), \quad a \otimes x \mapsto \alpha(a)_{1 2} \delta^{(\omega)}_l(x)_{1 3}
\]
does not define an algebra homomorphism unless $\Gamma$ is commutative.  To remedy this we appeal to the notion of braided tensor ploduct which takes into account of the noncommutativity of $\Gamma$.

We consider an action $\Adj^{(\omega)}$ of $\Gamma$ on $C^*_{r, \omega}(\Gamma)$ given by
\[
\Adj^{(\omega)}_g(\lambda^{(\omega)}_h) = \lambda^{(\omega)}_g \lambda^{(\omega)}_h (\lambda^{(\omega)}_g)^* = \omega(g, h) \omega(g h, g^{-1}) \lambda^{(\omega)}_{g h g^{-1}}.
\]
Let $\tildAdj^{(\omega)}$ denote the algebra homomorphism
\[
C^*_{r, \omega}(\Gamma) \rightarrow \MA(C_0(\Gamma) \otimes C^*_{r, \omega}(\Gamma)),
\quad x \mapsto \sum_h \delta_h \otimes \Adj^{(\omega)}_{h^{-1}}(x).
\]
This is implemented as the adjoint by the $\omega$-representation unitary $W^{(\omega)}$, and satisfies $\iota \otimes \tildAdj^{(\omega)} \circ \tildAdj^{(\omega)} = \hat{\delta} \otimes \iota \circ \tildAdj^{(\omega)}$.  Hence it defines a coaction of the dual quantum group $(C_0(\Gamma), \hat{\delta})$, that is, an action of $\Gamma$.

Combined with the coaction $\delta^{(\omega)}_l$ of $C^*_r(\Gamma)$, the algebra $C^*_{r, \omega}(\Gamma)$ becomes a \textit{$\Gamma$-Yetter--Drinfeld-C$^*$-algebra}~\cite{MR2566309}.  It amounts to verifying the commutativity of the diagram
\begin{equation}
\begin{CD}
\label{eq:tw-grp-alg-YD-Gamma-alg-comm-diag}
C^*_{r, \omega}(\Gamma) @>{\delta^{(\omega)}_l}>> \hat{S} \otimes C^*_{r, \omega}(\Gamma) @>{\iota \otimes \tildAdj^{(\omega)}}>> \MA(\hat{S} \otimes S \otimes C^*_{r, \omega}(\Gamma)) \\
@VV{\tildAdj^{(\omega)}}V @. @VV{\Sigma_{12}}V \\
\MA(S \otimes C^*_{r, \omega}(\Gamma)) @>{\iota \otimes \delta^{(\omega)}_l}>> \MA(S \otimes \hat{S} \otimes C^*_{r, \omega}(\Gamma)) @>{\Adj_W}>> \MA(S \otimes \hat{S} \otimes C^*_{r, \omega}(\Gamma))
\end{CD},
\end{equation}
where $\hat{S} = C^*_r(\Gamma)$, $S = C_0(\Gamma)$, $W$ is the fundamental unitary $\sum_h \delta_h \otimes \lambda_{h}$ in $\MA(C_0(\Gamma) \otimes C^*_r(\Gamma))$, and $\Sigma$ is the transposition of tensors.  If we track the image of $\lambda^{(\omega)}_g \in C^*_{r, \omega}(\Gamma)$ along the top-right arrows, we obtain
\begin{multline*}
\lambda^{(\omega)}_g \mapsto \lambda_g \otimes \lambda^{(\omega)}_g
\mapsto \sum_h \lambda_g \otimes \delta_h \otimes (\lambda^{(\omega)}_h)^* \lambda^{(\omega)}_g \lambda^{(\omega)}_h \mapsto \sum_h \delta_h \otimes \lambda_g \otimes (\lambda^{(\omega)}_h)^* \lambda^{(\omega)}_g \lambda^{(\omega)}_h .
\end{multline*}
Similarly, if we go along the left-bottom arrows, we obtain
\begin{multline*}
\lambda^{(\omega)}_g \mapsto \sum_h \delta_h \otimes (\lambda^{(\omega)}_h)^* \lambda^{(\omega)}_g \lambda^{(\omega)}_h  \mapsto \sum_h \delta_h \otimes \lambda_{h g h^{-1}} \otimes (\lambda^{(\omega)}_h)^* \lambda^{(\omega)}_g \lambda^{(\omega)}_h \\
\mapsto \sum_h \delta_h \otimes \lambda_g \otimes (\lambda^{(\omega)}_h)^* \lambda^{(\omega)}_g \lambda^{(\omega)}_h,
\end{multline*}
where we used
\begin{equation*}
\begin{split}
\delta^{(\omega)}_l((\lambda^{(\omega)}_h)^* \lambda^{(\omega)}_g \lambda^{(\omega)}_h ) &= \omega(h^{-1}, g) \omega(h^{-1} g, h) \lambda_{h^{-1} g h} \otimes \lambda^{(\omega)}_{h^{-1} g h}\\
 &= \lambda_{h^{-1} g h} \otimes (\lambda^{(\omega)}_h)^* \lambda^{(\omega)}_g \lambda^{(\omega)}_h .
\end{split}
\end{equation*}
Combining these, we conclude that the diagram~\eqref{eq:tw-grp-alg-YD-Gamma-alg-comm-diag} is indeed commutative.

As proved in~\cite{MR2566309}*{Theorem~3.2}, a Yetter--Drinfeld algebra is the same thing as an algebra endowed with a coaction of the function algebra of the \textit{Drinfeld double} $\Double(\Gamma)$.  In our setting, $\Double(\Gamma)$ is represented by the algebra $C_0^r(\Double(\Gamma)) = C_0(\Gamma) \otimes C^*_r(\Gamma)$ endowed with the coproduct
\[
\Delta = (\Sigma \circ \Adj_W)_{2 3} \circ \hat{\delta} \otimes \delta\colon \delta_h \otimes \lambda_g \mapsto \sum_{h' h'' = h} (\delta_{h'} \otimes \lambda_{h'' g h''^{-1}}) \otimes (\delta_{h''} \otimes \lambda_g).
\]
Thus, the dual `convolution' algebra $C^*_r(\Double(\Gamma))$ can be considered as the Drinfeld double Hopf algebra of $C^*_r(\Gamma)$ and $C_0(\Gamma)$ in the C$^*$-algebraic framework.

When $\omega$ is a $\U(1)$-valued $2$-cocycle on $\Gamma$, the Yetter--Drinfeld algebra structure on $C^*_{r, \omega}(\Gamma)$ corresponds to the coaction
\[
C^*_{r, \omega}(\Gamma) \rightarrow \MA(C_0^r(\Double(\Gamma)) \otimes C^*_{r, \omega}(\Gamma)), \quad \lambda^{(\omega)}_g \mapsto \sum_h \delta_h \otimes \lambda_{h^{-1} g h} \otimes \Adj^{(\omega)}_{h^{-1}}(\lambda^{(\omega)}_g).
\]

Let $A$ be a $C^*_r(\Gamma)$-C$^*$-algebra.  The \textit{braided tensor product} $A \boxtimes C^*_{r, \omega}(\Gamma)$ of $A$ and $C^*_{r, \omega}(\Gamma)$~\cite{MR2566309}*{Definition~3.3} is the C$^*$-algebra of operators on the Hilbert C$^*$-module $\ell^2(\Gamma) \otimes A \otimes C^*_{r, \omega}(\Gamma)$ generated by the operators of the form $\alpha(a)_{1 2} \tildAdj^{(\omega)}(x)_{1 3}$ for $a \in A$ and $x \in C^*_{r, \omega}(\Gamma)$.  By means of the conditional expectation $\iota \otimes \tau$ from $A \otimes C^*_{r, \omega}(\Gamma)$ onto $A$, we may regard $A \boxtimes C^*_{r, \omega}(\Gamma)$ as a subalgebra of $\HMEnd{\ell^2(\Gamma) \otimes A \otimes \ell^2(\Gamma)}$.  Note that our convention (the Yetter--Drinfeld algebra being the second component in the braided tensor product) is different from that of~\cite{MR2566309}*{Definition~3.3}.  By~\cite{MR2182592}*{Proposition~8.3}, we have
\[
A \boxtimes C^*_{r, \omega}(\Gamma) = \overline{\alpha(A)_{1 2} \tildAdj^{(\omega)}(C^*_{r, \omega}(\Gamma))_{1 3}}
\]
as a closed linear subspace of $\HMEnd{\ell^2(\Gamma) \otimes A \otimes \ell^2(\Gamma)}$.

The braided tensor product $A \boxtimes C^*_{r, \omega}(\Gamma)$ admits a natural coaction $\alpha \otimes \delta^{(\omega)}_l$ of $C^*_r(\Gamma)$ which we shall call the \textit{diagonal coaction}.  It is given by
\[
\alpha \otimes \delta^{(\omega)}_l ( \alpha(a)_{1 2} \tildAdj^{(\omega)}(x)_{1 3}) = \delta \otimes \iota (\alpha(a))_{1 2 3} \iota \otimes \tildAdj^{(\omega)} ( \delta^{(\omega)}_l (x))_{1 2 4}.
\]

\subsection{Exterior equivalence of actions}

Let us briefly recall the notion of exterior equivalence between the (co)actions on C$^*$-algebras by the quantum groups $C^*_r(\Gamma)$ and $C_0(\Gamma)$.

Let $\alpha$ and $\beta$ be actions of $\Gamma$ on a C$^*$-algebra $A$.  These two actions are said to be \textit{exterior equivalent} when there exists a family $(u_g)_{g \in \Gamma}$ of unitaries in $\MA(A)$ satisfying $u_g \alpha_g(u_h) = u_{g h}$ and $\beta_g = \Adj_{u_g} \circ \alpha_g$ for any $g, h \in \Gamma$.  Two actions $\alpha$ and $\beta$ of $\Gamma$ on different algebras $A$ and $B$ are said to be \textit{outer conjugate} if there is an isomorphism $\phi\colon B \rightarrow A$ such that the action $(\phi \beta_g \phi^{-1})_g$ on $A$ is exterior equivalent to $\alpha$.  Outer conjugate actions define isomorphic crossed products, and the dual (co)actions on the crossed products become conjugate.

Similarly, two coactions $\alpha$ and $\beta$ of $C^*_r(\Gamma)$ on a C$^*$-algebra $A$ are said to be exterior equivalent when there is a unitary element $X$ in $C^*_r(\Gamma) \otimes A$ satisfying $X_{2 3} \iota \otimes \alpha(X) = \delta \otimes \iota (X)$ and $X \alpha(x) X^* = \beta(x)$ for $x \in A$.  Such $X$ is called an $\alpha$-cocycle.

\section{Deformation of algebras}
\label{sect:deform-alg}

\subsection{Definition and elementary properties}
\label{sec:defin-elem-prop}

\begin{dfn}
Let $A$ be a C$^*$-algebra with a coaction $\alpha$ of $C^*_r(\Gamma)$, and $\omega$ be a $\U(1)$-valued $2$-cocycle on $\Gamma$.  We define the \textit{deformation $A_{\alpha, \omega}$ of $A$ with respect to $\alpha$ and $\omega$} to be the fixed point algebra $(A \boxtimes C^*_{r, \overline{\omega}}(\Gamma))^{C^*_r(\Gamma)}$ under the diagonal coaction $\alpha \otimes \delta^{(\overline{\omega})}_l$.  When there is no source of confusion about $\alpha$, we write $A_\omega$ instead of $A_{\alpha, \omega}$ and call it \textit{the $\omega$-deformation} of $A$.
\end{dfn}

\begin{prop}
\label{prop:A-omega-alt-dfn}
 Let $\Gamma$, $\omega$, and $A$ be as above.  Then the deformed algebra $A_\omega$ is isomorphic to the subalgebra $A'_\omega$ of $C^*_{r, \omega}(\Gamma) \otimes A$ consisting of the elements $x$ satisfying $\iota \otimes \alpha(x)_{2 1 3} = \delta^{(\omega)}_l \otimes \iota(x)$.
\end{prop}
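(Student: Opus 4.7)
The plan is to construct an explicit isomorphism $\Phi\colon A_\omega\to A'_\omega$ at the level of finite-spectrum elements and then extend by continuity. Using the description
\[
A\boxtimes C^*_{r,\overline{\omega}}(\Gamma)=\overline{\alpha(A)_{12}\tildAdj^{(\overline{\omega})}(C^*_{r,\overline{\omega}}(\Gamma))_{13}}
\]
recorded above, together with density of $A_\fin$ and of the linear span of the $\lambda^{(\overline{\omega})}_h$, I can reduce to analyzing operators of the form $\alpha(a_g)_{12}\tildAdj^{(\overline{\omega})}(\lambda^{(\overline{\omega})}_h)_{13}$ with $a_g$ of spectrum $g$. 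Applying the diagonal coaction $\alpha\otimes\delta^{(\overline{\omega})}_l$ to such an operator and using $\alpha(a_g)=\lambda_g\otimes a_g$ together with $\delta^{(\overline{\omega})}_l(\lambda^{(\overline{\omega})}_h)=\lambda_h\otimes\lambda^{(\overline{\omega})}_h$ produces $\lambda_g\lambda_h=\lambda_{gh}$ in the coaction slot; hence the fixed-point subalgebra $A_\omega$ is the closed linear span of the elements with $h=g^{-1}$.

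Next I would define
\[
\Phi\Bigl(\sum_g\alpha(a_g)_{12}\tildAdj^{(\overline{\omega})}(\lambda^{(\overline{\omega})}_{g^{-1}})_{13}\Bigr)=\sum_g\overline{\omega(g,g^{-1})}\,\lambda^{(\omega)}_g\otimes a_g,
\]
where the scalar correction $\overline{\omega(g,g^{-1})}$ is forced by the multiplicativity check below. Verifying that the image lies in $A'_\omega$ is a one-line computation: both $(\iota\otimes\alpha)(\Phi(x))$ followed by the $213$-permutation, and $(\delta^{(\omega)}_l\otimes\iota)(\Phi(x))$, produce $\sum_g\overline{\omega(g,g^{-1})}\,\lambda_g\otimes\lambda^{(\omega)}_g\otimes a_g$. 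Conversely, slicing the equivariance condition defining $A'_\omega$ forces each coefficient $a_g$ of a general $\sum_g\lambda^{(\omega)}_g\otimes a_g\in A'_\omega$ to have spectrum $g$, so $\Phi$ is algebraically bijective on dense subspaces.

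The main obstacle is verifying multiplicativity. Using the commutation identity $\delta_k\lambda_h=\lambda_h\delta_{h^{-1}k}$ in $\HMEnd{\ell^2(\Gamma)}$, a short computation yields the braiding rule $\tildAdj^{(\overline{\omega})}(x)_{13}\alpha(b)_{12}=\alpha(b)_{12}\tildAdj^{(\overline{\omega})}(\Adj^{(\overline{\omega})}_{h^{-1}}(x))_{13}$ whenever $b$ has spectrum $h$. Combined with $\lambda^{(\overline{\omega})}_{h^{-1}}\lambda^{(\overline{\omega})}_{g^{-1}}=\overline{\omega(h^{-1},g^{-1})}\lambda^{(\overline{\omega})}_{(gh)^{-1}}$, this collapses the product of two basis elements of $A_\omega$ into a single one with scalar $\overline{\omega(h^{-1},g^{-1})}$. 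Matching this against the tensor product of the $\Phi$-images reduces exactly to the identity
\[
\omega(h^{-1},g^{-1})\,\omega(g,h)\,\omega(gh,(gh)^{-1})=\omega(g,g^{-1})\,\omega(h,h^{-1}),
\]
which is the cocycle relation already recorded in the discussion of $\tilde\omega$. The involution check is analogous and shorter. Finally, since $\Phi$ is a $*$-algebra isomorphism between dense $*$-subalgebras of two $C^*$-algebras and intertwines natural conditional expectations (the $\tau$-slice on $A_\omega$ and the $\tau\otimes\iota$-slice on $A'_\omega$), continuous extension and bijectivity of the resulting $C^*$-algebra map are standard.
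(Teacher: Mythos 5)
Your computations are correct, and the resulting map is in fact the same isomorphism as the paper's, but you reach it by a genuinely different route. The paper's proof is a one-stroke untwisting: it conjugates the whole braided tensor product by the unitary $W^{(\overline{\omega})}_{13}$, which sends $\alpha(x)_{12}\mapsto\sum_g\lambda^{(\omega)}_g\otimes\alpha^{(g)}(x)\otimes\lambda^{(\overline{\omega})}_g$ and $\tildAdj^{(\overline{\omega})}(y)_{13}\mapsto y_3$, so that $A\boxtimes C^*_{r,\overline{\omega}}(\Gamma)$ becomes the honest tensor product $A'_\omega\otimes C^*_{r,\overline{\omega}}(\Gamma)$ and the diagonal coaction becomes $(\iota\otimes\delta^{(\overline{\omega})}_l)_{213}$, acting only on the last factor; the fixed-point algebra is then visibly $A'_\omega$, and isometry comes for free because everything is conjugation by a unitary. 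You instead identify the fixed-point algebra generator by generator (the span of $\alpha(a_g)_{12}\tildAdj^{(\overline{\omega})}(\lambda^{(\overline{\omega})}_{g^{-1}})_{13}$) and build $\Phi$ by hand; your braiding relation, the scalar $\overline{\omega(g,g^{-1})}$, and the reduction of multiplicativity to $\omega(h^{-1},g^{-1})\omega(g,h)\omega(gh,h^{-1}g^{-1})=\omega(g,g^{-1})\omega(h,h^{-1})$ all check out, and indeed your $\Phi$ is exactly the restriction of $\Adj(W^{(\overline{\omega})}_{13})$ to the fixed-point algebra, the scalar arising from $\lambda^{(\overline{\omega})}_g\lambda^{(\overline{\omega})}_{g^{-1}}=\overline{\omega(g,g^{-1})}$.

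The one place where your argument is thinner than it should be is the final extension step: a $*$-isomorphism between dense $*$-subalgebras of two $C^*$-algebras does not in general extend (compare the full and reduced group algebras, which share $\C[\Gamma]$), so "standard" is doing real work there. Your idea of intertwining the expectations is the right one and can be made rigorous: both $(\iota\otimes\tau)$-type expectations onto the $e$-fiber are faithful, faithful expectations have faithful KSGNS representations, and since $\Phi$ intertwines them it is isometric on the dense subalgebra; alternatively you can simply observe that $\Phi$ is spatially implemented by $W^{(\overline{\omega})}_{13}$, which is precisely the shortcut the paper takes. A similar small amount of care is needed for the density claims you use on both sides (that finite sums $\sum_g\lambda^{(\omega)}_g\otimes a_g$ with $a_g\in A_g$ are dense in $A'_\omega$, and that the $h=g^{-1}$ generators span a dense subspace of the fixed-point algebra); both follow from the standard Fej\'er-type averaging for coactions of $C^*_r(\Gamma)$, but they should be stated rather than left implicit.
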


\begin{proof}
Note that the C$^*$-algebras $A \boxtimes C^*_{r, \overline{\omega}}(\Gamma)$ and $C^*_{r, \omega}(\Gamma) \otimes A \otimes C^*_{r, \overline{\omega}}(\Gamma)$ are represented on $\ell^2(\Gamma) \otimes A \otimes C^*_{r, \overline{\omega}}(\Gamma)$.  We have a homomorphism $\Phi$ from the former to the latter by $x \mapsto W^{(\overline{\omega})}_{1 3} x (W^{(\overline{\omega})})^*_{1 3}$.  The effect of $\Phi$ on the generators of $A \boxtimes C^*_{r, \overline{\omega}}(\Gamma)$ is described by
 \begin{align*}
 \alpha(x)_{1 2} &\mapsto \sum_g \lambda^{(\omega)}_g \otimes \alpha^{(g)}(x) \otimes \lambda^{(\overline{\omega})}_g,&
 \tildAdj^{(\overline{\omega})}(y)_{1 3} &\mapsto y_{3}
 \end{align*}
 for $x \in A_\fin$ and $y \in C^*_{r, \overline{\omega}}(\Gamma)$.  Thus the image of $\Phi$ is $A'_\omega \otimes C^*_{r, \overline{\omega}}(\Gamma)$, and the corresponding coaction of $C^*_r(\Gamma)$ is simply given by $(\iota \otimes \delta^{(\overline{\omega})}_l)_{2 1 3}$.  Hence the fixed point algebra is given by $A'_\omega$.
\end{proof}

The above characterization implies that we obtain the correct algebras when either of the input is `trivial'.

\begin{cor}
 When the $C^*_r(\Gamma)$-C$^*$-algebra $(A, \alpha)$ is given by the pair $(C^*_r(\Gamma), \delta)$, the deformed algebra $A_\omega$ is isomorphic to $C^*_{r, \omega}(\Gamma)$.
\end{cor}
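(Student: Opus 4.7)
The plan is to invoke Proposition~\ref{prop:A-omega-alt-dfn} to realize $A_\omega$ as the concrete subalgebra $A'_\omega \subset C^*_{r,\omega}(\Gamma) \otimes C^*_r(\Gamma)$, and then to recognise this subalgebra as the image of the right coaction $\delta^{(\omega)}_r$, which (being a $*$-coaction) is injective and hence provides an isomorphism with $C^*_{r,\omega}(\Gamma)$.

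Concretely, I would first specialise the defining condition in the proposition to $(A, \alpha) = (C^*_r(\Gamma), \delta)$: an element $x \in C^*_{r,\omega}(\Gamma) \otimes C^*_r(\Gamma)$ belongs to $A'_\omega$ iff
\[
(\iota \otimes \delta)(x)_{2 1 3} = (\delta^{(\omega)}_l \otimes \iota)(x)
\]
inside $C^*_r(\Gamma) \otimes C^*_{r,\omega}(\Gamma) \otimes C^*_r(\Gamma)$. Next I would test the condition on an element of finite spectrum (dense by the remark after the definition of $A_\fin$), writing $x = \sum_{g,h} c_{g,h}\, \lambda^{(\omega)}_g \otimes \lambda_h$. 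The left-hand side then expands to $\sum_{g,h} c_{g,h}\, \lambda_h \otimes \lambda^{(\omega)}_g \otimes \lambda_h$, while the right-hand side equals $\sum_{g,h} c_{g,h}\, \lambda_g \otimes \lambda^{(\omega)}_g \otimes \lambda_h$, using that $\delta^{(\omega)}_l(\lambda^{(\omega)}_g) = \lambda_g \otimes \lambda^{(\omega)}_g$. Linear independence of the generators $\lambda_g$ of $C^*_r(\Gamma)$ in the first tensor leg forces $c_{g,h} = 0$ whenever $g \neq h$, so $x = \sum_g c_g\, \lambda^{(\omega)}_g \otimes \lambda_g$.

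Finally, this diagonal expression is precisely the image of the right coaction
\[
\delta^{(\omega)}_r\colon C^*_{r,\omega}(\Gamma) \to C^*_{r,\omega}(\Gamma) \otimes C^*_r(\Gamma), \quad \lambda^{(\omega)}_g \mapsto \lambda^{(\omega)}_g \otimes \lambda_g,
\]
which is an injective $*$-homomorphism. Composing with the isomorphism $A_\omega \cong A'_\omega$ of Proposition~\ref{prop:A-omega-alt-dfn} yields the desired isomorphism $C^*_{r,\omega}(\Gamma) \cong A_\omega$.

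The only mild subtlety is the passage from the finite-spectrum calculation to a $C^*$-level conclusion: one needs that the defining condition cuts out a norm-closed subspace (which it does, as both sides are norm-continuous in $x$), so that the closure of the diagonal span coincides with $A'_\omega$. Granted the preparatory material, this step is routine and the corollary reduces to the explicit algebraic computation above.
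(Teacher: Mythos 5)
Your overall route is the paper's: invoke Proposition~\ref{prop:A-omega-alt-dfn}, identify $A'_\omega$ with the closed span of the diagonal elements $\lambda^{(\omega)}_g \otimes \lambda_g$, and recognise that span as the image of the injective coaction $\lambda^{(\omega)}_g \mapsto \lambda^{(\omega)}_g \otimes \lambda_g$ (the paper writes $\delta^{(\omega)}_l$ where you write $\delta^{(\omega)}_r$, but by cocommutativity of $C^*_r(\Gamma)$ these carry the same data, so this is immaterial). The gap is in how you pass from the finite computation to the identification of $A'_\omega$. Your calculation with $x = \sum_{g,h} c_{g,h}\, \lambda^{(\omega)}_g \otimes \lambda_h$ only determines the intersection of $A'_\omega$ with the algebraic tensor product of $C^*_{r,\omega}(\Gamma)$ and $C^*_r(\Gamma)$; a general element of $A'_\omega$ is a norm limit of finite sums of elementary tensors which need not themselves satisfy the defining relation, so the pointwise computation does not apply to the approximants. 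The repair you propose --- that the defining condition cuts out a norm-closed subspace --- yields only the easy inclusion $\overline{\mathrm{span}}\ensemble{\lambda^{(\omega)}_g \otimes \lambda_g \suchthat g \in \Gamma} \subset A'_\omega$; it does not give the inclusion $A'_\omega \subset \overline{\mathrm{span}}\ensemble{\lambda^{(\omega)}_g \otimes \lambda_g \suchthat g \in \Gamma}$, which is the one you actually need.

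The missing ingredient is spectral. The coaction $\alpha_\omega$ of $C^*_r(\Gamma)$ introduced after Example~\ref{exmpl:twisted-crossed-prod} restricts to $A_\omega \simeq A'_\omega$, and for any $C^*_r(\Gamma)$-$C^*$-algebra the linear span of elements of finite spectrum is dense (Section~\ref{sec:prelim}). For $x$ in the $g$-spectral subspace of $A'_\omega$, the defining relation forces the second leg into the $g$-spectral subspace $\C \lambda_g$ of $(C^*_r(\Gamma), \delta)$ and the first leg into that of $\delta^{(\omega)}_l$, i.e.\ $x \in \C\, \lambda^{(\omega)}_g \otimes \lambda_g$; density of the finite-spectrum part then gives $A'_\omega = \overline{\mathrm{span}}\ensemble{\lambda^{(\omega)}_g \otimes \lambda_g \suchthat g \in \Gamma}$, and your conclusion via injectivity of the coaction follows. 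Alternatively, you can avoid the issue entirely by reading off the image of the homomorphism $\Phi$ in the proof of Proposition~\ref{prop:A-omega-alt-dfn}: for $(A, \alpha) = (C^*_r(\Gamma), \delta)$ the generators $\alpha(x)_{1 2}$ are sent to $\lambda^{(\omega)}_g \otimes \lambda_g \otimes \lambda^{(\overline{\omega})}_g$, which exhibits the fixed-point algebra directly as the diagonal span --- this is what the paper's one-line proof is implicitly doing. With either repair your argument is complete and coincides with the paper's.
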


\begin{proof}
By Proposition~\ref{prop:A-omega-alt-dfn}, we may identify the braided tensor product with the subalgebra of $C^*_{r, \omega}(\Gamma) \otimes C^*_r(\Gamma)$ spanned by $\lambda^{(\omega)}_g \otimes \lambda_g$ for $g \in \Gamma$.  As this is equal to the image of $\delta^{(\omega)}_l$, we obtain the assertion.
\end{proof}

\begin{cor}
\label{cor:triv-param-case}
 Let $A$ be a C$^*$-algebra with a coaction $\alpha$ of $C^*_r(\Gamma)$.  When the cocycle $\omega$ is trivial, the deformed algebra $A_\omega$ is isomorphic to $A$.
\end{cor}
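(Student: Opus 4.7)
My plan is to apply Proposition~\ref{prop:A-omega-alt-dfn} and identify $A'_\omega$ with the image $\alpha(A)$ of the coaction.  Since $\omega$ is trivial, $C^*_{r,\omega}(\Gamma)=C^*_r(\Gamma)$ and $\delta^{(\omega)}_l=\delta$, so $A'_\omega$ is the subalgebra of $C^*_r(\Gamma)\otimes A$ consisting of the elements $x$ satisfying $(\iota\otimes\alpha)(x)_{213}=(\delta\otimes\iota)(x)$.  By cocommutativity of $\delta$, the right-hand side is symmetric in its first two tensor legs, so this condition simplifies to the coaction identity $(\iota\otimes\alpha)(x)=(\delta\otimes\iota)(x)$.

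The inclusion $\alpha(A)\subseteq A'_\omega$ is then immediate from the coassociativity axiom $(\iota\otimes\alpha)\circ\alpha=(\delta\otimes\iota)\circ\alpha$.  For the reverse inclusion I plan to analyze the Fourier coefficients $x_g:=(\tau(\lambda_{g^{-1}}\,\cdot)\otimes\iota)(x)\in A$ of any $x\in A'_\omega$.  Applying the slice $\tau(\lambda_{g^{-1}}\,\cdot)\otimes\iota\otimes\iota$ to the first tensor leg of the defining equation, the left-hand side becomes $\alpha(x_g)$ while the right-hand side becomes $\lambda_g\otimes x_g$.  Hence $\alpha(x_g)=\lambda_g\otimes x_g$, meaning that $x_g$ lies in the spectral subspace $\alpha^{(g)}(A)\subseteq A_\fin$.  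In particular, any $x\in A'_\omega$ of finite spectral support $F$ on the first leg equals $\alpha\bigl(\sum_{g\in F}x_g\bigr)\in\alpha(A_\fin)$.

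The main obstacle is to pass from finite to general spectral support, that is, to establish the norm-density of the finite-spectrum part in $A'_\omega$.  My approach would be to exploit that $\alpha$ is an isometric injective $*$-homomorphism (so $\alpha(A)$ is norm-closed in $C^*_r(\Gamma)\otimes A$) together with the density of $A_\fin$ in $A$.  Combined with the identification of the finite-spectrum part of $A'_\omega$ with $\alpha(A_\fin)$ from the previous step, a standard approximation argument then yields $\overline{\alpha(A_\fin)}=\alpha(A)=A'_\omega$, and the injectivity of $\alpha$ gives the desired isomorphism $A_\omega\cong A'_\omega\cong A$.
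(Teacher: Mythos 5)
Your first two steps are fine and follow the same route as the paper, which simply identifies $A'_\omega$ with $\alpha(A)$ when $\omega$ is trivial: the inclusion $\alpha(A)\subseteq A'_\omega$ is indeed immediate from coassociativity, and your slice computation correctly shows that for $x\in A'_\omega$ the Fourier coefficients $x_g=(\tau(\lambda_{g^{-1}}\,\cdot)\otimes\iota)(x)$ satisfy $\alpha(x_g)=\lambda_g\otimes x_g$, i.e.\ $x_g$ lies in the spectral subspace $A_g$. The gap is in the last step. Knowing that $\alpha$ is isometric with closed range and that $A_\fin$ is dense in $A$ only gives $\overline{\alpha(A_\fin)}=\alpha(A)$; it does not give $A'_\omega\subseteq\alpha(A)$. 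For that you would need to approximate an arbitrary $x\in A'_\omega$ in norm by elements of $A'_\omega$ whose first-leg Fourier support is finite (or otherwise produce from $x$ an element $a\in A$ with $\alpha(a)=x$), and this is exactly the nontrivial point: for a general discrete group there is no Fej\'er-type summation, so an element of $C^*_r(\Gamma)\otimes A$ is \emph{not} known to be a norm limit of its finite Fourier truncations, and density of $A_\fin$ in $A$ says nothing about density of the finite-spectrum part of $A'_\omega$ in $A'_\omega$. As written, the ``standard approximation argument'' is circular: it assumes the density statement that is equivalent to the inclusion being proved. The same difficulty is already visible in the basic case $A=C^*_r(\Gamma)$, $\alpha=\delta$, where one must show that every $x$ with $\hat{x}(g,h)=0$ for $g\neq h$ lies in $\delta(C^*_r(\Gamma))$.

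The statement you are missing is true — it is the standard characterization of the image of a continuous (reduced) coaction by the invariance condition, and the paper's one-line proof takes it for granted — but closing it requires an actual construction of the preimage rather than an appeal to density. Two ways to do this: (i) use a covariant representation $X$ as in Proposition~\ref{prop:rep-of-deformed-alg-from-covar-rep}; restricting $A'_\omega$ to the subspace $X^*(\delta_e\otimes H)\simeq H$ gives a homomorphism inverting $\alpha$ on $A'_\omega$; or (ii) in the untwisted case, produce $a\in A$ directly from $x$ by a compression/Fourier-multiplier argument (slices of the form $(\omega_{\xi,\eta}\otimes\iota)\circ\delta$ with $\xi,\eta\in\ell^2(\Gamma)$ map $C^*_r(\Gamma)$ into itself, which lets one build an element of $A$ with the prescribed coefficients $x_g$), and then conclude $x=\alpha(a)$ because elements of $C^*_r(\Gamma)\otimes A$ are determined by their Fourier coefficients, as you implicitly use. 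Without some such ingredient, the passage from finite to general spectral support does not go through.
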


\begin{proof}
In this case the algebra $A'_\omega$ in Proposition~\ref{prop:A-omega-alt-dfn} is the image of $\alpha$.  Hence we obtain $A_\omega \simeq A$.
\end{proof}

Also straightforward from the definition is that, when the coaction $\alpha$ is trivial, $A_\omega$ is isomorphic to $A$ for any $2$-cocycle $\omega$.

\begin{rmk}
When $a \in A_\fin$, the expression $\sum_g \lambda^{(\omega)}_g \otimes \alpha^{(g)}(a)$ defines an element in $A'_\omega$.  We let $a^{(\omega)}$ denote the corresponding element in $A_\omega$.  The $\omega$-deformation $A_\omega$ can be regarded as a certain C$^*$-algebraic completion of the vector space $\ensemble{ a^{(\omega)} \suchthat a \in A_\fin} \simeq A_\fin$ endowed with the twisted $*$-algebra structure
\begin{align*}
a^{(\omega)} b^{(\omega)} &= \sum_{g, h} \omega(g, h) (\alpha^{(g)}(a) \alpha^{(h)}(b))^{(\omega)},&
(a^{(\omega)})^* &= \sum_g (\alpha^{(g)}(a)^*)^{(\omega)}.
\end{align*}
\end{rmk}

\begin{example}
 Let $A$ be a $\T^n$-C$^*$-algebra for some $n$, and $(\theta_{j k})_{j k}$ be a skewsymmetric real matrix of size $n$.  Then the \textit{$\theta$-deformation}~\cite{MR1002830} $A_\theta$ of $A$ is given by $(A \otimes C(\T^n)_\theta)^{\T^n}$, where $C(\T^n)_\theta$ is the universal C$^*$-algebra generated by $n$ unitaries $u_1, \ldots, u_n$ satisfying $u_j u_k = e^{i \theta_{j k}} u_k u_j$, and $\T^n$ acts on $A \otimes C(\T^n)_\theta$ by the diagonal action.
 
 The algebra $C(\T^n)_\theta$ can be regarded as the twisted group algebra of $\Z^n$ with the $2$-cocycle $\omega(x, y) = e^{i (\theta x, y)}$.  By Proposition~\ref{prop:A-omega-alt-dfn}, $A_\theta$ can be identified with $A_\omega$.
\end{example}

\begin{example}
\label{exmpl:twisted-crossed-prod}
 Let $B$ be a $\Gamma$-C$^*$-algebra.  Then the reduced crossed product $\Gamma \ltimes B$ is a $C^*_r(\Gamma)$-C$^*$-algebra by the dual coaction.  If $\omega$ is a $2$-cocycle on $\Gamma$, the deformed algebra $(\Gamma \ltimes B)_\omega$ can be identified with the twisted reduced crossed product $\Gamma \ltimes_{\alpha, \omega} B$~\cite{MR0241994}.
\end{example}

There is another coaction of $C^*_r(\Gamma)$ on $A \boxtimes C^*_{r, \omega}(\Gamma)$, given by
\[
\alpha(x)_{1 2} \tildAdj^{(\overline{\omega})}(y)_{1 3} \mapsto \iota \otimes \alpha \circ \alpha(x)_{1 2 3} \tildAdj^{(\overline{\omega})}(y)_{2 4}.
\]
We denote this coaction by $\alpha_\omega$.  It is implemented as the adjoint by the dual fundamental unitary $\hat{W} = \sum_g \lambda_g \otimes \delta_g$.  It can be easily seen from the definitions that the two coactions $\alpha_\omega$ and $\alpha \otimes \delta^{(\overline{\omega})}_l$ of $C^*_r(\Gamma)$ commute with each other.  Hence $\alpha_\omega$ restricts to the fixed point subalgebra $A_\omega$ of $\alpha \otimes \delta^{(\overline{\omega})}_l$.

\begin{rmk}
\label{rmk:deformation-param-additivity}
 When $\omega$ and $\eta$ are $\U(1)$-valued $2$-cocycles on $\Gamma$, we have $(A_\omega)_\eta = A_{\omega \cdot \eta}$ for any $C^*_r(\Gamma)$-C$^*$-algebra $A$.
\end{rmk}

We have the following generalization of the isomorphism~\eqref{eq:isom-tw-cross-c0-untw-cross-c0}.

\begin{prop}
\label{prop:cross-prod-C-0-Gamma-A-omega}
 The crossed product algebra $C_0(\Gamma) \ltimes_{\alpha_\omega} A_\omega$ is isomorphic to the corresponding algebra $C_0(\Gamma) \ltimes_{\alpha} A$ of the untwisted case.
\end{prop}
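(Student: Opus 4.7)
The plan is to reduce this to the isomorphism~\eqref{eq:isom-tw-cross-c0-untw-cross-c0}. First I will identify $A_\omega$ with $A'_\omega \subset C^*_{r,\omega}(\Gamma) \otimes A$ via Proposition~\ref{prop:A-omega-alt-dfn}, with typical generator $a^{(\omega)} = \sum_g \lambda^{(\omega)}_g \otimes \alpha^{(g)}(a)$ for $a \in A_\fin$. A short calculation using $\delta^{(\omega)}_l(\lambda^{(\omega)}_g) = \lambda_g \otimes \lambda^{(\omega)}_g$ will show that, under this identification, the coaction $\alpha_\omega$ is simply the restriction to $A'_\omega$ of the coaction $\delta^{(\omega)}_l \otimes \iota$ on $C^*_{r,\omega}(\Gamma) \otimes A$; explicitly, $\alpha_\omega(a^{(\omega)}) = \sum_g \lambda_g \otimes (\alpha^{(g)}(a))^{(\omega)}$.

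Since this outer coaction is trivial in the $A$-factor, the ambient crossed product $C_0(\Gamma) \ltimes_{\delta^{(\omega)}_l \otimes \iota} (C^*_{r,\omega}(\Gamma) \otimes A)$ factors as $(C_0(\Gamma) \ltimes_{\delta^{(\omega)}_l} C^*_{r,\omega}(\Gamma)) \otimes A$, and $C_0(\Gamma) \ltimes_{\alpha_\omega} A_\omega$ embeds into it. Next I will verify the left-handed analog of~\eqref{eq:isom-tw-cross-c0-untw-cross-c0}, giving $C_0(\Gamma) \ltimes_{\delta^{(\omega)}_l} C^*_{r,\omega}(\Gamma) \simeq \cptops(\ell^2(\Gamma))$, via a direct generator-wise computation that uses the cocycle identity~\eqref{eq:cocycle-ident-omega}. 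Tensoring with the identity on $A$ embeds $C_0(\Gamma) \ltimes_{\alpha_\omega} A_\omega$ into $\cptops(\ell^2(\Gamma)) \otimes A$.

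The last step is to identify this image with the natural embedding of $C_0(\Gamma) \ltimes_\alpha A$ in $\cptops(\ell^2(\Gamma)) \otimes A$. Sandwiching $\alpha_\omega(a^{(\omega)})$ between $\delta_{h_1} \otimes 1 \otimes 1$ and $\delta_{h_2} \otimes 1 \otimes 1$ picks out the single summand with $g = h_1 h_2^{-1}$, giving $\lambda_g \delta_{h_2} \otimes \lambda^{(\omega)}_g \otimes \alpha^{(g)}(a)$, which under the above isomorphism is sent to a $\U(1)$-scalar multiple of $\lambda_g \delta_{h_2} \otimes \alpha^{(g)}(a)$; the analogous sandwich on the other side yields exactly $\lambda_g \delta_{h_2} \otimes \alpha^{(g)}(a)$, so the two subalgebras coincide. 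The main obstacle will be the cocycle-bookkeeping underlying the isomorphism of the second paragraph; once that is in place, the unit scalars appearing in the last step preserve each $g$-spectral subspace of $A$, making the identification of linear spans immediate.
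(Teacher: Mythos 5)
Your outline is correct and reaches the proposition, but it reorganizes the paper's argument rather than reproducing it, so a comparison is worthwhile. The paper works with the whole crossed product in one stroke: it represents $C_0(\Gamma) \ltimes_{\alpha_\omega} A_\omega$ on $\ell^2(\Gamma)^{\otimes 2} \otimes A$ by the operators $(\delta_h)_1$ and $\sum_g \lambda_g \otimes \lambda^{(\omega)}_g \otimes \alpha^{(g)}(x)$, conjugates by the explicit diagonal unitary $V\colon \delta_k \otimes \delta_{k'} \mapsto \overline{\omega}(k^{-1}, k)\, \omega(k^{-1}, k')\, \delta_k \otimes \delta_{k'}$ acting only on the first two legs, and checks on generators, via the cocycle identity~\eqref{eq:cocycle-ident-omega}, that each generator goes to $\omega(g,h)$ times the corresponding untwisted generator, so the image is exactly $C_0(\Gamma) \ltimes_\alpha A$. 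You instead factor the ambient crossed product as $\bigl( C_0(\Gamma) \ltimes_{\delta^{(\omega)}_l} C^*_{r,\omega}(\Gamma) \bigr) \otimes A$, reduce the untwisting to the $A$-free ``left-handed'' analogue of~\eqref{eq:isom-tw-cross-c0-untw-cross-c0}, and then match the two subalgebras by sandwiching with $(\delta_{h_1})_1$ and $(\delta_{h_2})_1$, using that the correcting scalars depend only on $(g,h)$ and hence respect the spectral subspaces; this buys modularity (the cocycle computation is isolated in a statement about the group algebra alone) at the price of an extra span/density argument, which does go through because the sandwiches of $\alpha_\omega$ applied to finite-spectrum elements already span the crossed product densely, and their products stay in that span. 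One point you should not underestimate: the deferred ``generator-wise'' lemma is more than bookkeeping, since a scalar assignment on generators must be shown to extend to a $*$-isomorphism of the crossed products; the cleanest way is to exhibit a diagonal unitary implementing it --- which is precisely the paper's $V$ specialized to $A = \C$ --- or equivalently to check coherence of the scalars against the matrix units of $\cptops(\ell^2(\Gamma))$. With that supplied, together with the short verification you promise that under Proposition~\ref{prop:A-omega-alt-dfn} the coaction $\alpha_\omega$ becomes the restriction of $\delta^{(\omega)}_l \otimes \iota$ (the paper uses the same spatial picture), your proof is complete, and its computational kernel coincides with the paper's.
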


\begin{proof}
 We identify $A_\omega$ with the algebra $A'_\omega$ of Proposition~\ref{prop:A-omega-alt-dfn}.  Thus, the crossed product $C_0(\Gamma) \ltimes_{\alpha_\omega} A_\omega$ is represented by the C$^*$-algebra of operators generated by $(\delta_h)_1$ and $\sum_g \lambda_g \otimes \lambda^{(\omega)}_g \otimes \alpha^{(g)}(x)$ on $\ell^2(\Gamma)^{\otimes 2} \otimes A$.
 
 Let $V$ be the unitary operator $\delta_k \otimes \delta_{k'} \mapsto \omega(k^{-1}, k') \delta_k \otimes \delta_{k'}$.  The assertion follows once we prove that the image of $\Phi = \Adj_{V_{1 2}}\colon A'_\omega \rightarrow B(\ell^2(\Gamma)^{\otimes 2} \otimes A)$ is equal to 
 \[
 C_0(\Gamma) \ltimes A' = \vee \ensemble{ (\delta_h)_1, \sum_g \lambda_g \otimes \lambda_g \otimes \alpha^{(g)}(x) \suchthat h \in \Gamma, x \in A}.
 \]
If $h \in \Gamma$ and $x \in A'_\omega$ has finite spectrum, the action of $\Phi(\alpha_\omega(x) (\delta_h)_1)$ on the vector $\delta_k \otimes \delta_{k'} \otimes b$ is given by
\[
\sum_g \delta_{h, k} \overline{\omega(k^{-1}, k')} \omega(k^{-1} g^{-1}, g k') \delta_{g h} \otimes \delta_{g k'} \otimes \alpha^{(g)}(x) b.
\]
Using the cocycle identity for $\omega$, we see that this is equal to
\[
\sum_g \omega(g, h) \delta_{h, k} \delta_{g h} \otimes \delta_{g k'} \otimes \alpha^{(g)}(x) b,
\]
which is equal to the action of $\sum_{g} \omega(g, h) \lambda_g \otimes \lambda_g \otimes \alpha^{(g)}(x) (\delta_h)_1$.  This operator is indeed in $C_0(\Gamma) \ltimes A'$.
\end{proof}

We have the following expression of $\hat{\alpha}_\omega$,
\begin{multline}
\label{eq:tw-cross-prod-YD-alg-C-0-coact}
(\hat{\alpha}_\omega)_k((\sum_g \lambda_g \otimes \lambda_g \otimes \alpha^{(g)}(x)) (\delta_h)_1) \\
= \omega(g, h k^{-1}) \overline{\omega(g, h)}(\sum_g \lambda_g \otimes \lambda_g \otimes \alpha^{(g)}(x)) (\delta_{h k^{-1}})_1) ,
\end{multline}
regarded as an action on $C_0(\Gamma) \ltimes_\alpha A$ via the isomorphism $\Phi$ in the proof of Proposition~\ref{prop:cross-prod-C-0-Gamma-A-omega}.  By the Takesaki--Takai duality, $A_\omega$ is strongly Morita equivalent to the crossed product $\Gamma \ltimes_{\hat{\alpha}_\omega} C_0(\Gamma) \ltimes_{\alpha_\omega} A_\omega$ with respect to the dual action $\hat{\alpha}_\omega$.

\subsection{Approximation property}
\label{sec:appr-prop}

For each $g \in \Gamma$, let $A_g$ denote the corresponding spectral subspace consisting of the elements $x \in A$ satisfying $\alpha^{(g)}(x) = x$.  Recall that the Fell bundle $(A_g)_{g \in \Gamma}$ associated with $A$ has the \textit{approximation property}~\cite{MR1488064}*{Definition~4.4} when there is a sequence $a_i$ of functions from $\Gamma$ into $A_e$ satisfying
\begin{gather}
\label{eq:approx-seq-bddness}
\sup_i \norm{\sum_g a_i(g)^* a_i(g)} < \infty,\\
\label{eq:approx-seq-approximation}
\lim_i \sum_h a_i(g h)^* b a_i(h) = b \quad (g \in \Gamma, b \in A_g).
\end{gather}
If $\Gamma$ is amenable, any $C^*_r(\Gamma)$-C$^*$-algebra has the approximation property.  This property also holds when $A$ is given as $\Gamma \ltimes_\beta B$ for some amenable action $\beta$ of a discrete group $\Gamma$ on a unital C$^*$-algebra $B$.

\begin{lem}
\label{lem:approx-property-invariance}
Let $\Gamma$, $\omega$, and $A$ be as above.  The Fell bundle associated with $A$ has the approximation property if and only if the one associated with $A_\omega$ has the approximation property.
\end{lem}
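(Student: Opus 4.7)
The strategy I propose is to identify the Fell bundle of $A_\omega$ with that of $A$ via the map $a \mapsto a^{(\omega)}$, and then to transfer approximating sequences along this identification. First I would work in the $A'_\omega$ picture of Proposition~\ref{prop:A-omega-alt-dfn}: an element $a \in A_g$ corresponds to $\lambda^{(\omega)}_g \otimes a$ in $C^*_{r, \omega}(\Gamma) \otimes A$, so $a \mapsto a^{(\omega)}$ is isometric on each $A_g$ and is the identity under the embedding $A_e \subset A_\omega$. Using that $\alpha_\omega$ is implemented by the dual fundamental unitary $\hat{W}$, I would check that the image of $A_g$ is precisely the spectral subspace $(A_\omega)_g$ for the coaction $\alpha_\omega$.

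Next, from the twisted multiplication and involution formulas for $A_\omega$ already established, combined with the normalization $\omega(e, g) = \omega(g, e) = 1$, one obtains
\[
c^{(\omega)} a^{(\omega)} = (c a)^{(\omega)}, \qquad a^{(\omega)} c^{(\omega)} = (a c)^{(\omega)}, \qquad (c^{(\omega)})^* = (c^*)^{(\omega)}
\]
for every $c \in A_e$ and $a \in A_g$, and hence for any function $f \colon \Gamma \to A_e$ and any $b \in A_g$,
\[
(f(g)^{(\omega)})^* f(g)^{(\omega)} = (f(g)^* f(g))^{(\omega)}, \qquad (f(g h)^{(\omega)})^* b^{(\omega)} f(h)^{(\omega)} = (f(g h)^* b \, f(h))^{(\omega)}.
\]

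The forward direction will then follow by summation. Given an approximating sequence $(a_i)$ for the Fell bundle of $A$, boundedness of $\sum_g a_i(g)^* a_i(g)$ in $A_e$ transfers directly to boundedness of $\sum_g (a_i(g)^{(\omega)})^* a_i(g)^{(\omega)}$ in $(A_\omega)_e$, and $\sum_h a_i(g h)^* b \, a_i(h) \to b$ in $A_g$ transfers to the analogous convergence in $(A_\omega)_g$, because $a \mapsto a^{(\omega)}$ is isometric on each spectral subspace. For the converse I intend to invoke Remark~\ref{rmk:deformation-param-additivity} and Corollary~\ref{cor:triv-param-case}: deforming $A_\omega$ by $\overline{\omega}$ recovers $A$ as a $C^*_r(\Gamma)$-$C^*$-algebra, so the forward direction applied to $A_\omega$ and the cocycle $\overline{\omega}$ pulls an approximating sequence for $A_\omega$ back to one for $A$.

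The only step requiring genuine verification is the identification of the spectral subspaces $(A_\omega)_g$ with $A_g$ and the matching of the coaction $\alpha_\omega$ in the $A'_\omega$ picture; thereafter, the argument is bookkeeping that relies solely on the normalization of $\omega$, and no analytic obstacle is anticipated.
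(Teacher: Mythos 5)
Your proposal is correct and follows essentially the same route as the paper's proof: identify $(A_\omega)_e$ with $A_e$, transfer the approximating functions using $\norm{b^{(\omega)}} = \norm{b}$ and $(a_i(gh)^{(\omega)})^* b^{(\omega)} a_i(h)^{(\omega)} = (a_i(gh)^* b\, a_i(h))^{(\omega)}$, which are exactly the equalities the paper invokes. Your explicit use of Remark~\ref{rmk:deformation-param-additivity} with $\overline{\omega}$ for the converse is just the symmetry the paper leaves implicit.
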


\begin{proof}
 The algebra $(A_\omega)_e$ is naturally isomorphic to $A_e$.  Hence we may regard $a_i$ as a sequence of functions with values in $A_\omega$.  Then the condition~\eqref{eq:approx-seq-bddness} is automatic.  The other one~\eqref{eq:approx-seq-approximation} follows from the equalities
 \begin{align*}
 \norm{b^{(\omega)}} &= \norm{b}, &
 a_i(g h)^* b^{(\omega)} a_i(h) &= (a_i(g h)^* b a_i(h))^{(\omega)}
 \end{align*}
 for any $g \in \Gamma$ and $b \in A_g$.
\end{proof}

We have the following adaptation of~\cite{MR1237992}*{Theorem~4.1} in our context.

\begin{prop}
\label{prop:nucl-invar}
 Let $\Gamma$, $\omega$, and $A$ be as above, and suppose that the Fell bundle associated with $A$ has the approximation property.  Then $A_\omega$ is nuclear if and only if $A$ is nuclear.
\end{prop}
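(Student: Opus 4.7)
The strategy is to reduce nuclearity of both $A$ and $A_\omega$ to the common nuclearity question for their unit fibres, invoking a Rieffel-type criterion for Fell bundles with the approximation property.

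First, I would observe that the unit fibres are canonically identified: by the very construction of $A_\omega$ and the Remark following Corollary~\ref{cor:triv-param-case}, the $e$-spectral subspace $(A_\omega)_e$ is naturally $*$-isomorphic to the fixed-point algebra $A_e$ via $a \mapsto a^{(\omega)}$, since an element of trivial spectrum is untwisted. This identification is already exploited in the proof of Lemma~\ref{lem:approx-property-invariance}, so here it is purely a matter of bookkeeping.

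Second, I would invoke the Fell-bundle version of~\cite{MR1237992}*{Theorem~4.1}: for a $C^*_r(\Gamma)$-$C^*$-algebra whose associated Fell bundle has the approximation property, the total cross-sectional algebra is nuclear if and only if its unit fibre is nuclear. Applied to $A$ this gives the equivalence $A$ nuclear $\Leftrightarrow A_e$ nuclear, and applied to $A_\omega$ (which has the approximation property by Lemma~\ref{lem:approx-property-invariance}) it gives $A_\omega$ nuclear $\Leftrightarrow (A_\omega)_e$ nuclear. Combining through the identification $A_e \cong (A_\omega)_e$ yields the conclusion.

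The main obstacle is making the Rieffel--Exel nuclearity criterion precise in this setting. The route I would follow is via Takesaki--Takai: the Morita equivalence $A_\omega \sim \Gamma \ltimes_{\hat{\alpha}_\omega} (C_0(\Gamma) \ltimes_{\alpha_\omega} A_\omega)$ combined with the isomorphism of Proposition~\ref{prop:cross-prod-C-0-Gamma-A-omega} exhibits $A_\omega$ as Morita equivalent to a $\Gamma$-crossed product of the same coefficient algebra $C_0(\Gamma) \ltimes_\alpha A$ as in the untwisted case, where the deformation has only twisted the dual $\Gamma$-action by the cocycle appearing in~\eqref{eq:tw-cross-prod-YD-alg-C-0-coact}. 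The approximation property of the Fell bundle translates into amenability of this dual action, and amenability is preserved under the cocycle perturbation, so by the standard theorem that crossed products by amenable actions preserve nuclearity one concludes that nuclearity of $A_\omega$ and of $A$ both reduce to nuclearity of $C_0(\Gamma) \ltimes_\alpha A$, hence are equivalent.
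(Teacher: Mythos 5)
Your sketch ends up following essentially the same route as the paper: identify $C_0(\Gamma) \ltimes_{\alpha_\omega} A_\omega$ with $C_0(\Gamma) \ltimes_\alpha A$ via Proposition~\ref{prop:cross-prod-C-0-Gamma-A-omega}, pass to the $\Gamma$-crossed products by the dual actions (Morita equivalent to $A_\omega$ and $A$ by Takesaki--Takai), and use the approximation property to move nuclearity between the common coefficient algebra and these crossed products; the detour through the unit fibres $(A_\omega)_e \simeq A_e$ is harmless but not actually needed. The real difference is in how the last transfer is justified, and that is the step you gloss. The paper does not assert Anantharaman-Delaroche amenability of the dual action: from the approximation property it only extracts the coincidence of the full and reduced crossed products (\cite{MR1895615}*{Corollary~3.6}) and then obtains nuclearity of the crossed product from nuclearity of the coefficient by running the completely positive approximation argument of \cite{MR1926869} with the approximation-property functions; moreover it avoids the converse implication entirely by the symmetry $(A_\omega)_{\overline{\omega}} \simeq A$ of Remark~\ref{rmk:deformation-param-additivity}, so only one direction is ever proved. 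Your claim that the approximation property ``translates into amenability of this dual action'' is substantially stronger than what Exel--Ng provide and is left unjustified (the approximation-property functions take values in $A_e$, not in the centre of the bidual, so producing the central functions required for amenability is a genuinely nontrivial matter, settled only in much later work on amenability of Fell bundles); note also that you cannot retreat to the weaker statement ``full $=$ reduced plus nuclear coefficient implies nuclear crossed product'', which is false in general, so the approximation functions must enter the nuclearity argument explicitly, as in the paper's citation. On the other hand, your observation that amenability (once granted) is insensitive to the twist is fine: $\hat{\alpha}$ and $\hat{\alpha}_\omega$ differ pointwise by conjugation with the unitaries $\sum_g \omega(g k, k^{-1}) \delta_g$ appearing in~\eqref{eq:v-k-dfn}, hence induce the same action on the centre of the enveloping von Neumann algebra; and with amenability in hand Anantharaman-Delaroche's theorem gives both implications at once, which is what your version buys in exchange for the heavier input that the paper deliberately avoids.
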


\begin{proof}
 The Fell bundle associated with $A_\omega$ also has the approximation property by Lemma~\ref{lem:approx-property-invariance}.  By Remark~\ref{rmk:deformation-param-additivity}, it is enough to prove that $A_\omega$ is nuclear when $A$ is nuclear.
 
 By the amenability of the Fell bundle associated with $A_\omega$, the maximal and the reduced crossed product coincide for the dual action of $\Gamma$ on $C_0(\Gamma) \ltimes_{\alpha_\omega} A_\omega$~\cite{MR1895615}*{Corollary~3.6}.  Since $C_0(\Gamma) \ltimes_{\alpha_\omega} A_\omega$ is nuclear by Proposition~\ref{prop:cross-prod-C-0-Gamma-A-omega}, we conclude that its crossed product by $\Gamma$ is also nuclear, c.f.~\cite{MR1926869}*{the proof of Theorem~5.3, (2) $\Rightarrow$ (3)}.
\end{proof}

The above result can be proved by the combination of Lemma~\ref{lem:approx-property-invariance} and the nuclearity for amenable Fell bundles with nuclear unit fiber.  See Appendix for the details.

\subsection{Conjugacy of coactions}
\label{sec:conjugacy-coactions}

\begin{prop}
\label{prop:rep-of-deformed-alg-from-covar-rep}
Let $\Gamma$, $\omega$ be as in Section~\ref{sec:defin-elem-prop}, and $A$ be a $C^*_r(\Gamma)$-C$^*$-algebra represented on a Hilbert space $H$.  Suppose that there is a covariant representation $X \in \MA(C^*_r(\Gamma) \otimes \cptops(H))$ of $C^*_r(\Gamma)$.  Then, the action of $C^*_{r, \omega}(\Gamma) \otimes A$ on $\ell^2(\Gamma) \otimes H$ restricts to the one of the algebra $A'_\omega$ of Proposition~\ref{prop:A-omega-alt-dfn} on $X^* (\delta_e \otimes H)$.
\end{prop}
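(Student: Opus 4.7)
The plan is to reduce invariance of $X^*(\delta_e \otimes H)$ under $A'_\omega$ to verification on a set of elementary generators and then to perform a direct computation, using the structure that the corepresentation condition imposes on $X^*$.

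The first step is to identify a convenient generating family for $A'_\omega$. Each spectral element $b \in A$ with $\alpha(b) = \lambda_g \otimes b$ gives rise to $\lambda^{(\omega)}_g \otimes b \in A'_\omega$: applying either side of the defining identity $\iota \otimes \alpha(x)_{2 1 3} = \delta^{(\omega)}_l \otimes \iota(x)$ produces $\lambda_g \otimes \lambda^{(\omega)}_g \otimes b$. Since $A_{\fin}$ is dense in $A$, these elements have dense linear span in $A'_\omega$, and it suffices to check invariance of $X^*(\delta_e \otimes H)$ under each such $\lambda^{(\omega)}_g \otimes b$.

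The second step is to describe the Fourier decomposition of the isometry $\xi \mapsto X^*(\delta_e \otimes \xi)$. Writing $X^*(\delta_e \otimes \xi) = \sum_h \delta_h \otimes v_h(\xi)$ defines contractive operators $v_h \in B(H)$. Applying the covariance $X^*(1 \otimes a) = \alpha(a) X^*$ at $\delta_e \otimes \xi$ with $a = b$ of finite spectrum at $g$ yields the intertwining identity $v_h b = b v_{g^{-1} h}$. Taking adjoints of the corepresentation identity gives $(\delta \otimes \iota)(X^*) = X^*_{2 3} X^*_{1 3}$; combining this with the fact that $\delta$ on $C^*_r(\Gamma)$ is implemented by the adjoint action of the dual fundamental unitary $\hat W = \sum_g \lambda_g \otimes \delta_g$ (a direct computation from $\hat W(\delta_h \otimes \delta_k) = \delta_{k h} \otimes \delta_k$) and evaluating $\hat W_{1 2} X^*_{2 3} \hat W^*_{1 2} = X^*_{2 3} X^*_{1 3}$ on $\delta_e \otimes \delta_e \otimes \xi$ yields $v_h v_k = \delta_{h, k} v_h$. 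Since each $v_h$ is a contractive idempotent it is thus an orthogonal projection, and the unitarity of $X$ forces $\sum_h v_h = 1$, so $\{v_h\}_{h \in \Gamma}$ is a $\Gamma$-indexed family of mutually orthogonal projections partitioning $H$.

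The third step is a direct computation: for $b$ as above and $\xi \in H$, substituting $k = g h$ and using $b v_{g^{-1} k} = v_k b$,
\[
(\lambda^{(\omega)}_g \otimes b) X^*(\delta_e \otimes \xi) = \sum_h \omega(g, h) \delta_{g h} \otimes b v_h(\xi) = \sum_k \omega(g, g^{-1} k) \delta_k \otimes v_k(b \xi).
\]
Setting $U_g := \sum_k \omega(g, g^{-1} k) v_k \in B(H)$, which is unitary by the orthogonality of the $v_k$ together with $|\omega| = 1$, and using $v_k U_g = \omega(g, g^{-1} k) v_k$, the right-hand side equals $X^*(\delta_e \otimes U_g b \xi)$, which lies in $X^*(\delta_e \otimes H)$, as required.

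The main obstacle is the second step, particularly the derivation of $v_h v_k = \delta_{h, k} v_h$ from the corepresentation identity. This requires correctly identifying $\delta$ with the adjoint action by $\hat W$ and carefully tracking tensor leg positions when evaluating on $\delta_e \otimes \delta_e \otimes \xi$. Once the $\Gamma$-grading $H = \bigoplus_h v_h(H)$ is in hand, the remaining verification is a direct application of the cocycle property.
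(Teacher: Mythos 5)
Your argument is correct, but it takes a genuinely different route from the paper's. The paper never decomposes the corepresentation: it characterizes the subspace $X^*(\delta_e \otimes H)$, viewed inside $\delta_e \otimes \ell^2(\Gamma) \otimes H$, as the set of solutions of the single equation $X^*_{1 3}\zeta = \hat{W}^*_{1 2}\zeta$, and characterizes $A'_\omega$ (using covariance) as exactly those $a \in C^*_{r, \omega}(\Gamma) \otimes A$ with $\hat{W}^*_{1 2}(1 \otimes a)\hat{W}_{1 2} = X^*_{1 3}(1 \otimes a)X_{1 3}$; invariance is then a two-line computation valid for every element of $A'_\omega$, with no density argument and no spectral analysis. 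You instead extract from the corepresentation identity the family $v_h$ and prove it is a $\Gamma$-grading of $H$ by mutually orthogonal projections (your derivation of $v_h v_k = \delta_{h,k} v_h$ and the contractive-idempotent argument are fine, and your convention $\hat{W} = \sum_g \lambda_g \otimes \delta_g$ with $\delta = \Adj_{\hat{W}}$ is consistent, being the adjoint of the paper's $\sum_g \lambda_g^* \otimes \delta_g$), then verify invariance only on the spectral generators $\lambda^{(\omega)}_g \otimes b$, $b \in A_g$. This makes your proof depend on the density in $A'_\omega$ of the span of such elements; that density is standard for continuous coactions of compact quantum groups and is asserted in the paper's remark on the elements $a^{(\omega)}$, but your one-line justification ``since $A_\fin$ is dense in $A$'' is a shortcut worth expanding. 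What your route buys in exchange is an explicit description of the restricted action, $(\lambda^{(\omega)}_g \otimes b)\,X^*(\delta_e \otimes \xi) = X^*(\delta_e \otimes U_g b\,\xi)$ with $U_g = \sum_k \omega(g, g^{-1}k)\,v_k$, i.e.\ a concrete unitary implementation of the deformed product on $H$, which the paper's more abstract argument does not exhibit.
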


\begin{proof}
Recall that the dual fundamental unitary $\hat{W} = \sum_g \lambda_g^* \otimes \delta_g$ satisfies $\delta(x) = \hat{W}^* (1 \otimes x) \hat{W}$.  Hence the assumption
\[
\delta \otimes \iota(X^*) = \delta \otimes \iota(X^*)_{2 1 3} = X^*_{1 3} X^*_{2 3}
\]
implies
 \[
 X^*_{1 3} X^*_{2 3} (\delta_e \otimes \delta_e \otimes \xi) = \Adj_{\hat{W}^*_{1 2}}(X^*_{2 3}) (\delta_e \otimes \delta_e \otimes \xi) = \hat{W}^*_{1 2} X^*_{2 3} (\delta_e \otimes \delta_e \otimes \xi).
 \]
for $\xi \in H$.  Thus, any $\eta \in X^* (\delta_e \otimes H)$ satisfies $X^*_{1 3} (\delta_e \otimes \eta) = \hat{W}^*_{1 2} (\delta_e \otimes \eta)$.

Conversely, if we had $X^*_{1 3} (\delta_e \otimes \xi) = \hat{W}^*_{1 2} (\delta_e \otimes \xi)$ for some $\xi \in \ell^2(\Gamma) \otimes H$, we can write $\xi$ as $\sum_g \delta_g \otimes \xi_g$ and conclude that $X^* (\delta_e \otimes \xi_g) = \delta_g \otimes \xi_g$ for any $g$, i.e., $\xi = X^* (\delta_e \otimes \sum_g \xi_g)$.  Hence we can identify $X^*(\delta_e \otimes H)$ with the subspace $\ensemble{ \xi \suchthat X^*_{1 3} \xi = \hat{W}^*_{1 2} \xi}$ of $\delta_e \otimes \ell^2(\Gamma) \otimes H$ via the embedding $\xi \mapsto \delta_e \otimes \xi$.

By the covariance of $X$, we can characterize $A'_\omega$ as the subalgebra of $C^*_{r, \omega}(\Gamma) \otimes A$ satisfying
\[
\hat{W}^*_{1 2} ( 1 \otimes a) \hat{W}_{1 2} = X^*_{1 3} (1 \otimes a) X_{1 3}.
\]
If $\xi \in X^* (\delta_e \otimes H)$ and $a \in A'_\omega$, one has
\[
X^*_{1 3} (1 \otimes a ) (\delta_e \otimes \xi) = \hat{W}^*_{1 2} ( 1 \otimes a) \hat{W}_{1 2} X^*_{1 3} (\delta_e \otimes \xi) = \hat{W}^*_{1 2} ( 1 \otimes a) (\delta_e \otimes \xi),
\]
which proves the assertion.
\end{proof}

\begin{prop}
\label{prop:ext-equiv-str-Mor-equiv-deform}
 Let $\alpha$ and $\beta$ be exterior equivalent coactions of $C^*_r(\Gamma)$ on $A$, and $\omega$ be a $2$-cocycle on $\Gamma$.  Then the corresponding deformed algebras $A_{\alpha, \omega}$ and $A_{\beta, \omega}$ are strongly Morita equivalent.
\end{prop}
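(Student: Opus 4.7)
The approach I would take is to build a linking algebra over $M_2(A)$ whose coaction interpolates between $\alpha$ and $\beta$ via the cocycle $X$, apply the $\omega$-deformation, and read off the Morita equivalence from the resulting corner structure. Writing $X_1 = 1$ and $X_2 = X$, the plan is to define $\gamma \colon M_2(A) \rightarrow C^*_r(\Gamma) \otimes M_2(A)$ by
\[
\gamma(a\, e_{ij}) = X_i\, \alpha(a)\, X_j^* \otimes e_{ij} \qquad (i, j \in \{1, 2\}),
\]
equivalently $\gamma = \Adj_{\tilde X} \circ (\alpha \otimes \iota_{M_2})$ with $\tilde X = \mathrm{diag}(1, X) \in M(C^*_r(\Gamma) \otimes M_2(A))$. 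Multiplicativity is immediate from the matrix arithmetic, and coassociativity of $\gamma$ follows from the standard fact that $\tilde X$ is an $(\alpha \otimes \iota_{M_2})$-cocycle, which itself reduces to the $\alpha$-cocycle identity for $X$ in the $(2,2)$-entry and triviality in the $(1,1)$-entry. By construction $\gamma$ restricts to $\alpha$ on the $(1,1)$-corner $A\, e_{11}$ and to $\beta$ on the $(2,2)$-corner $A\, e_{22}$.

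The second step is to apply the $\omega$-deformation to $L := (M_2(A), \gamma)$. The matrix units $e_{ii} \in M_2(\C) \subset M(L)$ are $\gamma$-fixed, so they survive as orthogonal complementary projections $e_{ii}^{(\omega)} = e_{ii}$ in $M(L_\omega)$. The key identification to establish is
\[
e_{11}\, L_\omega\, e_{11} \cong A_{\alpha, \omega}, \qquad e_{22}\, L_\omega\, e_{22} \cong A_{\beta, \omega}.
\]
Using Proposition~\ref{prop:A-omega-alt-dfn}, an element of $L_\omega$ realized inside $C^*_{r, \omega}(\Gamma) \otimes M_2(A)$ lies in the $(i,i)$-corner precisely when it has the form $\sum_g \lambda^{(\omega)}_g \otimes (a_g\, e_{ii})$ with $(a_g)_g$ the spectral data of an element of $A_\fin$ for the restricted coaction ($\alpha$ when $i = 1$, $\beta$ when $i = 2$), and the defining intertwining condition for $L_\omega$ restricts verbatim to the one defining $A_{\alpha, \omega}$, resp.\ $A_{\beta, \omega}$.

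Since $e_{11} + e_{22} = 1$ in $M(L_\omega)$, strong Morita equivalence between $A_{\alpha, \omega}$ and $A_{\beta, \omega}$ follows once $e_{11}\, L_\omega\, e_{22}$ is shown to be a full $A_{\alpha, \omega}$-$A_{\beta, \omega}$-imprimitivity bimodule. The bimodule structure is inherited from the matrix multiplication in $L_\omega$, and fullness reduces to the original fullness $\overline{e_{12}\, L\, e_{21}} = e_{11}\, L\, e_{11}$, which is immediate from $\overline{A \cdot A} = A$, together with a density argument using that the spectral subspaces, and hence the deformation, are stable under multiplication by $\gamma$-fixed projections on either side.

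The principal obstacle is the corner identification in the second step. Because $\gamma$ is not a product coaction on $A \otimes M_2(\C)$ (owing to $X$ being nontrivial), one must verify directly that the $\omega$-deformation commutes with cutting down by $\gamma$-fixed projections. The cleanest route is to work in the braided tensor product $L \boxtimes C^*_{r, \overline{\omega}}(\Gamma)$ and check that compression by $e_{ii}$ intertwines the diagonal $C^*_r(\Gamma)$-coaction whose fixed points define $L_\omega$, thereby reducing the corner to the $\omega$-deformation of $e_{ii}\, L\, e_{ii}$ endowed with its induced coaction, which by the first step is precisely $A$ with coaction $\alpha$ or $\beta$.
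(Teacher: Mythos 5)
Your construction is correct in outline, but it takes a different route from the paper. The paper does not pass through a linking algebra: it equips the rank-one Hilbert $A$-module $A$ itself with the cocycle-twisted coaction $\xi \otimes x \mapsto \alpha(x) U^* \xi_1$ (covariant for $\alpha$ on the left and $\beta$ on the right) and then, exactly as in Proposition~\ref{prop:rep-of-deformed-alg-from-covar-rep}, cuts out the closed subspace $X_U(\delta_e \otimes A)$ of the Hilbert module $C^*_{r, \omega}(\Gamma) \otimes A$, which is invariant under the left action of $A'_{\alpha, \omega}$ and the right action of $A'_{\beta, \omega}$ and is declared to be the imprimitivity bimodule. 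You instead deform the linking algebra $L = M_2(A)$ with the perturbed coaction $\gamma = \Adj_{\tilde{X}} \circ (\alpha \otimes \iota)$ and read the two deformations off as complementary corners. Your intermediate steps do work: $\tilde{X} = \mathrm{diag}(1, X)$ is an $(\alpha \otimes \iota)$-cocycle, the $e_{ii}$ are $\gamma$-fixed and hence give multipliers of $L_\omega$, and in the picture of Proposition~\ref{prop:A-omega-alt-dfn} the compression argument identifies $e_{ii} L_\omega e_{ii}$ with $A'_{\alpha, \omega}$, resp.\ $A'_{\beta, \omega}$; moreover the off-corner $e_{11} L_\omega e_{22}$ is spanned by the $\lambda^{(\omega)}_g \otimes b\, e_{12}$ with $\alpha(b) X^* = \lambda_g \otimes b$, i.e.\ it is literally the same bimodule the paper constructs. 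What your route buys is an argument entirely at the level of deformed algebras, with no equivariant Hilbert-module language and no appeal to Proposition~\ref{prop:rep-of-deformed-alg-from-covar-rep}; the cost is that you must handle multipliers, continuity of $\gamma$ (standard for cocycle perturbations, but worth recording), and fullness explicitly.

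Fullness is the one step you should not wave through. The phrase ``reduces to the original fullness $\overline{A \cdot A} = A$ together with a density argument'' conflates two closures: $\overline{e_{11} L_\omega e_{22} \cdot e_{22} L_\omega e_{11}}$ is a closure in the \emph{deformed} norm, and density of the span of products of spectral elements in the original norm of $A$ does not by itself give density in $A_{\alpha, \omega}$, whose norm on $A_\fin$ is different. The repair is to reduce to the unit fiber, where the deformation is trivial: writing $E_g \subset A$ for the $g$-spectral subspace of the twisted coaction on the off-corner, products $\xi \eta^*$ with $\xi, \eta$ spectral in the same $E_g$ land in $A^\alpha_e$ with no twist, and $\overline{\mathrm{span}}_g\, E_g E_g^* = A^\alpha_e$ because $\sum_g E_g$ is dense in $A$, $\overline{A A^*} = A$, and the conditional expectation onto $A^\alpha_e$ picks out exactly these terms. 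Once the ideal $J = \overline{e_{11} L_\omega e_{22} L_\omega e_{11}}$ of $A_{\alpha, \omega}$ contains $A^\alpha_e$, an approximate unit of $A^\alpha_e$ acts as an approximate unit on each spectral subspace, so $J$ contains $a^{(\omega)}$ for every $a \in A_\fin$ and hence $J = A_{\alpha, \omega}$; the other side is symmetric. With this supplement your proof is complete; to be fair, the paper's own proof is equally silent on fullness, so this is a gap relative to a fully written argument rather than relative to the published one.
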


\begin{proof}
Let $U$ be an $\alpha$-cocycle satisfying $U \alpha(x) U^* = \beta(x)$.  As in the standard argument, the rank one Hilbert $A$-module $A$ admits a coaction of $C^*_r(\Gamma)$ defined by
\[
X_U \colon \xi \otimes x \mapsto \alpha(x) U^*( \xi \otimes 1) \quad (\xi \in \ell^2(\Gamma), x \in A).
\]
This coaction is covariant with respect to the coaction $\alpha$ on $A$ for the left $A$-module structure and $\beta$ for the right.  Then, as in Proposition~\ref{prop:rep-of-deformed-alg-from-covar-rep}, we can take the closed subspace $X_U (\delta_e \otimes A)$ in the Hilbert C$^*$-module $C^*_{r, \omega}(\Gamma) \otimes A$ which is closed under the left action of $A'_{\alpha, \omega}$ and the right action of $A'_{\beta, \omega}$.  This bimodule is an imprimitivity bimodule between the two algebras.
\end{proof}

\begin{cor}
\label{cor:A-omega-st-Mor-equiv-iter-twist-cross-prod}
 Let $A$ be a $C^*_r(\Gamma)$-C$^*$-algebra and $\omega$ be a $2$-cocycle on $\Gamma$.  Then the deformed algebra $A_\omega$ is strongly Morita equivalent to the twisted crossed product $\Gamma \ltimes_{\hat{\alpha}, \omega} C_0(\Gamma) \ltimes_\alpha A$.
\end{cor}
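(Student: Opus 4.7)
The plan is to combine the Takesaki--Takai duality with Proposition~\ref{prop:cross-prod-C-0-Gamma-A-omega}. First, Takesaki--Takai duality applied to the coaction $\alpha_\omega$ of $C^*_r(\Gamma)$ on $A_\omega$ yields a strong Morita equivalence between $A_\omega$ and the iterated crossed product $\Gamma \ltimes_{\hat{\alpha}_\omega}(C_0(\Gamma) \ltimes_{\alpha_\omega} A_\omega)$; this has already been recorded in the paragraph just preceding the corollary. By Proposition~\ref{prop:cross-prod-C-0-Gamma-A-omega}, the inner crossed product $C_0(\Gamma) \ltimes_{\alpha_\omega} A_\omega$ is isomorphic, via the map $\Phi$ constructed in its proof, to the untwisted crossed product $C_0(\Gamma) \ltimes_\alpha A$. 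Under this isomorphism, the dual action $\hat{\alpha}_\omega$ is transported to the action described in~\eqref{eq:tw-cross-prod-YD-alg-C-0-coact}.

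The remaining task is to identify $\Gamma \ltimes_{\hat{\alpha}_\omega}(C_0(\Gamma) \ltimes_\alpha A)$ with the twisted crossed product $\Gamma \ltimes_{\hat{\alpha}, \omega}(C_0(\Gamma) \ltimes_\alpha A)$. A single application of the cocycle identity~\eqref{eq:cocycle-ident-omega} rewrites the scalar factor $\omega(g, h k^{-1}) \overline{\omega(g, h)}$ in~\eqref{eq:tw-cross-prod-YD-alg-C-0-coact} as $\omega(g h, k^{-1}) \overline{\omega(h, k^{-1})}$. This rearrangement exhibits the transported action in the form $(\hat{\alpha}_\omega)_k = \Ad_{U(k)} \circ \hat{\alpha}_k$ for a family $(U(k))_{k \in \Gamma}$ of unitary multipliers of $C_0(\Gamma) \ltimes_\alpha A$ that forms an $\omega$-cocycle for $\hat{\alpha}$, in the sense that $U(k) \hat{\alpha}_k(U(k'))$ coincides with $U(k k')$ up to the scalar factor $\omega(k, k')$. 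Given such a family, the substitution $v_k \mapsto U(k) u_k$, with $v_k$ and $u_k$ the group unitaries of the untwisted and $\omega$-twisted crossed products respectively, extends by the standard exterior-equivalence argument for twisted actions to an isomorphism
\[
\Gamma \ltimes_{\hat{\alpha}_\omega}(C_0(\Gamma) \ltimes_\alpha A) \cong \Gamma \ltimes_{\hat{\alpha}, \omega}(C_0(\Gamma) \ltimes_\alpha A).
\]
Chaining this with the Morita equivalence of the first step yields the corollary.

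The main obstacle is the explicit construction of the cocycle $(U(k))_{k \in \Gamma}$. The scalar in~\eqref{eq:tw-cross-prod-YD-alg-C-0-coact} depends on $g$, $h$, and $k$, so $U(k)$ must interact nontrivially both with the $C_0(\Gamma)$-part (through $h$) and with the $\Gamma$-grading on $C_0(\Gamma) \ltimes_\alpha A$ coming from $\alpha$ (through $g$); pinning down the correct ansatz for $U(k)$ and verifying the twisted cocycle identity reduces to a bookkeeping exercise based on repeated use of~\eqref{eq:cocycle-ident-omega}, but finding the right form is the delicate point of the argument.
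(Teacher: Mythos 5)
Your argument is correct in outline, but it follows a genuinely different route from the paper's own proof. The paper applies Takesaki--Takai duality on the other side: the double dual coaction on the iterated crossed product $\Gamma \ltimes_{\hat{\alpha}} C_0(\Gamma) \ltimes_\alpha A$ is outer conjugate to the amplified coaction $\iota \otimes \alpha$ on $\cptops(\ell^2(\Gamma)) \otimes A$, so Proposition~\ref{prop:ext-equiv-str-Mor-equiv-deform} gives a strong Morita equivalence of the two $\omega$-deformations; the first deformation is identified with the twisted crossed product $\Gamma \ltimes_{\hat{\alpha}, \omega} C_0(\Gamma) \ltimes_\alpha A$ by Example~\ref{exmpl:twisted-crossed-prod}, and the second with $\cptops \otimes A_\omega$, which is Morita equivalent to $A_\omega$. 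That proof buys brevity and avoids all explicit cocycle bookkeeping, at the price of leaning on the identification in Example~\ref{exmpl:twisted-crossed-prod}. Your route --- Takesaki--Takai for $\alpha_\omega$ on $A_\omega$, then Proposition~\ref{prop:cross-prod-C-0-Gamma-A-omega}, then untwisting the transported action --- is precisely the Packer--Raeburn ``untwisting'' perspective that the paper mentions in the remark immediately after the corollary, run in reverse; it is more explicit and makes the mechanism visible, but it requires producing the projective family $(U(k))_k$ that you leave unconstructed.

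Two points in that last step deserve attention. First, the ansatz is not delicate: take $U(k) = \sum_{h} \omega(h k, k^{-1}) (\delta_h)_1$, a unitary multiplier living entirely in the $C_0(\Gamma)$-leg, i.e.\ the exact analogue of the first factor of $v_k$ in~\eqref{eq:v-k-dfn} with $\rho_k$ replaced by the untwisted dual action. The $g$-dependence you worry about appears automatically when $U(k)$ is commuted past the grading-$g$ generators $\sum_g \lambda_g \otimes \lambda_g \otimes \alpha^{(g)}(x)$, exactly as in the computation of $v_k \lambda_{g'} \delta_{h'} v_k^{-1}$ following~\eqref{eq:v-k-dfn}; this is why $\Adj_{U(k)} \circ \hat{\alpha}_k$ reproduces the factor $\omega(g h, k^{-1}) \overline{\omega(h, k^{-1})}$, i.e.\ the transported action. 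Second, the projective multiplicativity constant is not $\omega(k, k')$ but $\omega(k'^{-1}, k^{-1}) = \tilde{\omega}(k, k')$, compare~\eqref{eq:v-k-multiplicativity-with-omega}; consequently the untwisting a priori yields the twisted crossed product with respect to $\overline{\tilde{\omega}}$ (or $\tilde{\omega}$, depending on which side you absorb the unitaries), which is only cohomologous to $\omega$ (resp.\ $\overline{\omega}$) via $g \mapsto \omega(g, g^{-1})$, as noted after~\eqref{eq:opposite-cocycle}. Since twisted crossed products with respect to cohomologous cocycles are isomorphic, and the convention for $\Gamma \ltimes_{\hat{\alpha}, \omega}$ intended in the corollary is the one fixed by Example~\ref{exmpl:twisted-crossed-prod}, this does not affect the conclusion, but the extra step (and the convention check) should be stated rather than elided.
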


\begin{proof}
The double dual coaction of $C^*_r(\Gamma)$ on the iterated crossed product $C^*_r(\Gamma) \ltimes_{\hat{\alpha}} C_0(\Gamma) \ltimes_\alpha A$ and the amplification of $\alpha$ on $\cptops(\ell^2(\Gamma)) \otimes A$ are outer conjugate by the Takesaki--Takai duality.  The assertion follows from Proposition~\ref{prop:ext-equiv-str-Mor-equiv-deform} and the natural identification $(\cptops \otimes A)_\omega \simeq \cptops \otimes A_\omega$.
\end{proof}

This corollary shows that the twisted crossed product (Example~\ref{exmpl:twisted-crossed-prod}) is the universal example of $\omega$-deformation up to the strong Morita equivalence.  We can also see that Proposition~\ref{prop:cross-prod-C-0-Gamma-A-omega}, and the resulting strong Morita equivalence between $A_\omega$ and $\Gamma \ltimes_{\hat{\alpha}_\omega} C_0(\Gamma) \ltimes_\alpha A$ is an adaptation of the `untwisting' of twisted crossed products by Packer--Raeburn~\cite{MR1002543}*{Corollary~3.7}.

\subsection{\texorpdfstring{K}{K}-theory isomorphism of deformed algebras}
\label{sec:K-thry-isom-omega-deformations}

Let $\omega$ be a  normalized $\U(1)$-valued $2$-cocycle on $\gamma$.  For each $k \in \Gamma$, consider the unitary element
\begin{equation}
\label{eq:v-k-dfn}
v_k = \left ( \sum_g \omega(g k, k^{-1}) \delta_g \right ) \left( \sum_h \lambda_{h k^{-1} h^{-1}} \delta_h \right )
\end{equation}
in $\MA(C^*_r(\Gamma) \rtimes_{\delta_r} C_0(\Gamma)) = B(\ell^2(\Gamma))$.  The second sum is actually the unitary $\rho_k$ which implements the right translation $\delta_h \mapsto \delta_{h k^{-1}}$.  From the relation
\[
\lambda_{g'^{-1}} v_k \lambda_{g'} = \sum_g \omega(g k, k^{-1}) \delta_{g'^{-1} g} \rho_k = \sum_g \omega(g' g k, k^{-1}) \delta_g \rho_k,
\]
we conclude
\[
v_k \lambda_{g'} v_k^{-1} = \lambda_{g'} \sum_g \omega(g' g k, k^{-1}) \overline{\omega(g k, k^{-1})} \delta_g.
\]
Combining this and
\[
v_k \delta_{h'} v_k^{-1} = \rho_k \delta_{h'} \rho_k^{-1} = \delta_{h' k^{-1}},
\]
we obtain
\[
v_k \lambda_{g'} \delta_{h'} v_k^{-1} = \omega(g' h', k^{-1}) \overline{\omega(h', k^{-1})} \lambda_{g'} \delta_{h' k^{-1}}.
\]
By the cocycle condition for $\omega$ and~\eqref{eq:omega-delta-hat-on-untw-crossed-prod}, we see that the right hand side above is equal to $(\hat{\delta}^{(\omega)}_r)_{k}(\lambda_{g'} \delta_{h'})$.

The failure of the multiplicativity of $(v_k)_k$ is controlled by the cocycle $\omega$.  By 
\begin{equation*}
\begin{split}
v_{k} v_{k'} &= \sum_g \omega(g k, k^{-1}) \delta_g \rho_k \sum_h \omega(h k', k'^{-1}) \delta_h \rho_{k'}\\
&= \sum_{g = h k^{-1}} \omega(g k, k^{-1}) \omega(h k', k'^{-1}) \delta_g \rho_{k k'}
\end{split}
\end{equation*}
and the cocycle relation~\eqref{eq:cocycle-ident-omega} for $g_0 = g k k'$, $g_1 = k'^{-1}$, and $g_2 = k^{-1}$, we obtain
\begin{equation}
\label{eq:v-k-multiplicativity-with-omega}
v_k v_{k'} = \omega(k'^{-1}, k^{-1}) v_{k k'}.
\end{equation}

\begin{rmk}
\label{rmk:cocycle-untw-on-fin-subgr}
Suppose that the cocycle $\omega$ above is of the form $e^{i \omega_0}$ for some $\R$-valued $2$-cocycle $\omega_0$.  When $H$ is a finite subgroup of $\Gamma$, the $2$-cocycle $\omega_0|_H$ is a coboundary because $H^2(H, \R)$ is trivial.  Hence there exists a map $\phi$ from $H$ into $\R$ satisfying
\begin{equation}
\label{eq:tilde-omega-0-is-cbd-alpha}
\omega_0(h_0, h_1) = \phi(h_0) - \phi(h_0 h_1) + \phi(h_1) \quad (h_0, h_1 \in H).
\end{equation}
The normalization condition on $\omega_0$ implies the one $\phi(e) = 0$ for $\phi$.  The unitaries $(e^{- i \phi(h)} v_h)_{h \in H}$ are multiplicative by~\eqref{eq:v-k-multiplicativity-with-omega}, and they implement the action $\hat{\delta}^{(\omega)}_r|_H$ on $C^*_r \rtimes C_0(\Gamma)$ modulo the isomorphism~\eqref{eq:isom-tw-cross-c0-untw-cross-c0} by ~\eqref{eq:omega-delta-hat-on-untw-crossed-prod}.
\end{rmk}

Now, assume that $\omega$ is induced by an $\R$-valued $2$-cocycle $\omega_0$ as above.  Our goal is to show that the K-groups of $A_\omega$ are isomorphic to those of $A$.

Let $I$ denote the closed unit interval $[0, 1]$.  Generalizing the method of~\cite{MR2608195}*{Section~1}, we put $\omega^\theta = e^{i \theta \omega_0}$ for $\theta \in I$ and consider the following C$^*$-$C(I)$-algebra $A_{\omega^\star}$ over $I$, whose fiber at $\theta$ is given by $A_{\omega^\theta}$.

First, we consider the Hilbert space $L^2(I; \ell^2(\Gamma)) \simeq L^2(I) \otimes \ell^2(\Gamma)$, and the operators $\lambda^{(\overline{\omega}^\star)}_g$ for $g \in \Gamma$ defined by
\[
(\lambda^{(\overline{\omega}^\star)}_g \xi)_\theta = \lambda^{(\overline{\omega}^\theta)}_g \xi_\theta \quad (\xi \in L^2(I; \ell^2(\Gamma)), \theta \in I).
\]
Thus we obtain a C$^*$-$C(I)$-algebra $C^*_{r,\overline{\omega}^\star}(\Gamma)$, given as the C$^*$-algebra generated by these operators and the natural action of $C(I)$ on $L^2(I; \ell^2(\Gamma))$.

Next, we see that $C^*_{r,\overline{\omega}^\star}(\Gamma)$ is a $\Gamma$-Yetter--Drinfeld algebra by the coaction
\[
\lambda^{(\overline{\omega}^\star)}_g \mapsto \lambda_g \otimes \lambda^{(\overline{\omega}^\star)}_g, f \mapsto 1 \otimes f \quad(g \in \Gamma, f \in C(I))
\]
of $C^*_r(\Gamma)$ and the one
\[
\lambda^{(\overline{\omega}^\star)}_g \mapsto \delta_h \otimes (\lambda^{(\overline{\omega}^\star)}_h)^* \lambda^{(\overline{\omega}^\star)}_g \lambda^{(\overline{\omega}^\star)}_h, f \mapsto 1 \otimes f \quad(g, h \in \Gamma, f \in C(I))
\]
of $C_0(\Gamma)$.  This $C(I)$-algebra and its $\Gamma$-Yetter--Drinfeld algebra structure is induced by the twisted fundamental unitary
\[
W^{(\overline{\omega}^\star)} = W \overline{\omega}^\star\colon \delta_g \otimes \delta_h \mapsto (e^{- \theta i \omega_0(g, h)} \delta_g \otimes \delta_{g, h})_\theta
\]
on $L^2(I; \ell^2(\Gamma)^{\otimes 2})$ which commutes with $C(I)$.

Thus we can take the braided tensor product $A \boxtimes C^*_{r,\overline{\omega}^\star}(\Gamma)$ which is again a C$^*$-$C(I)$-algebra with a compatible coaction of $C^*_r(\Gamma)$.  The algebra $A_{\omega^\star}$ is defined to be the fixed point algebra for this coaction.  By an argument analogous to Proposition~\ref{prop:A-omega-alt-dfn}, this is isomorphic to the subalgebra $A'_{\omega^\star}$ of $A \otimes C^*_{r, \omega^\star}(\Gamma)$ consisting of the elements $a$ satisfying $\alpha_{1 3}(a) = \delta^{(\omega^\star)}_{1 2}(a)$.

The algebra $A_{\omega^\star}$ admits a coaction $\alpha_{\omega^\star}$ of $C^*_r(\Gamma)$ defined in the obvious way.  The crossed product $C_0(\Gamma) \ltimes_{\alpha_{\omega^\star}} A_{\omega^\star}$ is a $\Gamma$-C$^*$-$C(I)$-algebra, and the evaluation at each fiber is a $\Gamma$-homomorphism.

\begin{lem}
\label{lem:C-0-Gamma-crossed-prod-cont-field-triv-wo-Gamma-action}
 The C$^*$-$C(I)$-algebra $C_0(\Gamma) \ltimes_{\alpha_{\omega^\star}} A_{\omega^\star}$ is isomorphic to the constant field with fiber $C_0(\Gamma) \ltimes_{\alpha} A$.
\end{lem}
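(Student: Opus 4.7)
The plan is to run the argument of Proposition~\ref{prop:cross-prod-C-0-Gamma-A-omega} in a $C(I)$-linear fashion. First, I would introduce the $C(I)$-linear unitary $V_\star$ on $L^2(I) \otimes \ell^2(\Gamma)^{\otimes 2}$ whose fiber at $\theta \in I$ is the unitary $V$ appearing in the proof of Proposition~\ref{prop:cross-prod-C-0-Gamma-A-omega} applied to the cocycle $\omega_\theta = e^{i \theta \omega_0}$; explicitly,
\[
(V_\star(\delta_k \otimes \delta_{k'}))_\theta = e^{i\theta(\omega_0(k^{-1}, k') - \omega_0(k^{-1}, k))}\, \delta_k \otimes \delta_{k'}.
\]
Because $\omega_0$ is $\R$-valued, this defines a norm-continuous family of unitaries, hence a unitary multiplier of $C(I) \otimes \cptops(\ell^2(\Gamma)^{\otimes 2})$ that commutes with the $C(I)$-action.

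Conjugation by $V_{\star, 1 2}$ then yields a $C(I)$-linear $*$-homomorphism $\Phi_\star = \Adj_{V_{\star, 1 2}}$ out of the faithful representation of $C_0(\Gamma) \ltimes_{\alpha_{\omega_\star}} A_{\omega_\star}$ on $L^2(I) \otimes \ell^2(\Gamma)^{\otimes 2} \otimes H$. Fiberwise at $\theta$ this is precisely the isomorphism of Proposition~\ref{prop:cross-prod-C-0-Gamma-A-omega} for the cocycle $\omega_\theta$, so the image is a $C(I)$-subalgebra of $C(I) \otimes (C_0(\Gamma) \ltimes_\alpha A)$ whose fiber at every $\theta$ is the whole of $C_0(\Gamma) \ltimes_\alpha A$. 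In particular $\Phi_\star$ is fiberwise injective, hence injective.

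The remaining (and only non-automatic) point is to promote this fiberwise fullness to equality with the entire constant field. For this I would evaluate $\Phi_\star$ on generators $\alpha_{\omega_\star}(x)(\delta_h)_1$ with $h \in \Gamma$ and $x \in A_g$ an element of a single spectral subspace; the computation of Proposition~\ref{prop:cross-prod-C-0-Gamma-A-omega} then produces the section
\[
\theta \mapsto e^{i\theta \omega_0(g, h)}\, \lambda_g \otimes \lambda_g \otimes x\,(\delta_h)_1 .
\]
Multiplying by the $C(I)$-unit $\theta \mapsto e^{-i\theta \omega_0(g, h)}$ converts this into the constant section $\lambda_g \otimes \lambda_g \otimes x\,(\delta_h)_1$. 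As $g$, $h$, and $x \in A_g$ range, these constant sections together with the already-constant $(\delta_h)_1$ exhaust a generating set for $C_0(\Gamma) \ltimes_\alpha A$, so the image contains $C(I) \odot (C_0(\Gamma) \ltimes_\alpha A)$ and hence, by closure, the full constant field.

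I do not anticipate a genuine obstacle: the argument is a parameter-dependent repackaging of Proposition~\ref{prop:cross-prod-C-0-Gamma-A-omega}, and the one feature that requires comment, the norm continuity of the conjugating unitary in $\theta$, is automatic from the fact that $\omega_0$ is real-valued so that $\theta \mapsto e^{i\theta \omega_0(\cdot,\cdot)}$ is pointwise continuous.
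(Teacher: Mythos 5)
Your argument is in substance the paper's own proof: you conjugate by the fibrewise family of unitaries $V_\theta$ from Proposition~\ref{prop:cross-prod-C-0-Gamma-A-omega}, observe that on a generator coming from a single spectral subspace $A_g$ the resulting section differs from a constant section of $C(I) \otimes (C_0(\Gamma) \ltimes_\alpha A)$ only by the scalar function $\theta \mapsto e^{\pm i\theta\omega_0(g,h)} \in C(I)$, and absorb that function by $C(I)$-linearity; the paper runs the identical computation in the opposite direction, sending constant sections into $C_0(\Gamma) \ltimes_{\alpha_{\omega_\star}} A'_{\omega_\star}$. The one step you should repair is the claim that $\theta \mapsto V_\theta$ is \emph{norm}-continuous: the phases $\omega_0(k^{-1},k') - \omega_0(k^{-1},k)$ are in general unbounded as $k, k'$ range over $\Gamma$ (already for $\Gamma = \Z^2$ with $\omega_0(x,y) = x_1 y_2$), so $\norm{V_\theta - V_{\theta'}}$ does not tend to $0$ as $\theta' \to \theta$; pointwise continuity of the phases only yields strong ($*$-strong) continuity of the family. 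This weaker continuity is all you need: a bounded $*$-strongly continuous family of unitaries is strictly continuous, hence still defines a unitary multiplier of $C(I) \otimes \cptops(\ell^2(\Gamma)^{\otimes 2})$ --- equivalently, as the paper phrases it, a unitary on $L^2(I; \ell^2(\Gamma)^{\otimes 2})$ commuting with $C(I)$ --- and the $C(I)$-linearity of $\Adj_{V_{\star, 1 2}}$ together with the computation on generators is all the rest of your argument uses. With that justification corrected, the proof is complete and coincides with the paper's.
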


\begin{proof}
 The proof is essentially the same as that of Proposition~\ref{prop:cross-prod-C-0-Gamma-A-omega}.  The formula
 \[
 (V \delta_k \otimes \delta_{k'})_\theta = \omega^\theta(k^{-1}, k') \delta_k \otimes \delta_{k'}.
 \]
defines unitary operator $V$ on $L^2(I; \ell^2(\Gamma)^{\otimes 2})$ which commutes with $C(I)$.  When $x \in A$ and $h \in \Gamma$, the constant section $(\sum_g \lambda_g \otimes \lambda_g \otimes \alpha^{(g)}(x)) (\delta_h)_1$ of $C(I) \otimes C_0(\Gamma) \ltimes_{\alpha} A'$ is mapped to the element
 \[
  \left ( \left ( \sum_g \overline{\omega^\theta(g, h)} \lambda_g \otimes \lambda^{(\omega^\theta)}_g \otimes \alpha^{(g)}(x) \right ) (\delta_h)_1 \right )_\theta
 \]
 of $C_0(\Gamma) \ltimes_{\alpha_{\omega^\star}} A'_{\omega^\star}$.
\end{proof}

Thus, $A_{\omega^\star}$ is $\RKK(I, -, -)$-equivalent to the crossed product of $C(I) \otimes C_0(\Gamma) \ltimes_{\alpha} A$ by an action of $\Gamma$ corresponding to $\hat{\alpha}_\star$ via the isomorphism of Lemma~\ref{lem:C-0-Gamma-crossed-prod-cont-field-triv-wo-Gamma-action}.  Using~\eqref{eq:tw-cross-prod-YD-alg-C-0-coact}, we can express this action as
\begin{multline}
\label{eq:Gamma-on-cont-field}
\left ( (\hat{\alpha}_\star)_k \left( \left ( \sum_g \lambda_g \otimes \lambda_g \otimes \alpha^{(g)}(x) \right ) (\delta_h)_1 \right ) \right )_\theta \\
= \omega^\theta(g, h k^{-1}) \overline{\omega^\theta(g, h)} \left ( \sum_g \lambda_g \otimes \lambda_g \otimes \alpha^{(g)}(x) \right ) (\delta_{h k^{-1}})_1 
\end{multline}
for $x \in A$ and $h, k \in \Gamma$.

\begin{rmk}
  The C$^*$-$C(I)$-algebra $A_{\omega^\star}$ becomes a continuous field of C$^*$-algebras when the Fell bundle associated with $A$ has the approximation property~\cite{MR990592}*{Corollary~2.7}, see also the proof of Proposition~\ref{prop:nucl-invar}.
\end{rmk}

\begin{prop}
\label{prop:fin-subgrp-ext-equiv-on-A-C-Gamma-C-0}
  Let $H$ be any finite subgroup of $\Gamma$ and $\theta \in I$.  Then the restriction of $\hat{\alpha}_\star$ to $H$ is outer conjugate to the restriction of the constant field of the action $\hat{\alpha}_{\omega^\theta}$.
\end{prop}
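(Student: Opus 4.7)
The plan is to exhibit both actions as exterior equivalent to the trivial action of $H$ on a common $C(I)$-algebra, each implemented by a multiplicative unitary family, and then compose the two exterior equivalences to obtain outer conjugacy.

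Since $H^2(H,\R)=0$ for the finite group $H$, fix a normalized map $\phi\colon H\to\R$ trivializing $\tilde{\omega_0}|_H$ as in \eqref{eq:tilde-omega-0-is-cbd-alpha}. Because the involution $\omega\mapsto\tilde\omega$ of \eqref{eq:opposite-cocycle} is $\R$-linear, the scaled map $\theta'\phi$ trivializes $\tilde{\theta'\omega_0}|_H$ for every $\theta'\in I$ simultaneously. Writing $v_h^{(\theta')}$ for the unitary \eqref{eq:v-k-dfn} built from $\omega_{\theta'}$ and setting
\[
u_h(\theta') = e^{-i\theta'\phi(h)}\,v_h^{(\theta')}\qquad(h\in H,\;\theta'\in I),
\]
Remark~\ref{rmk:cocycle-untw-on-fin-subgr} together with \eqref{eq:v-k-multiplicativity-with-omega} guarantees that, for each fixed $\theta'$, the family $(u_h(\theta'))_{h\in H}$ is multiplicative in $h$ and implements $\hat{\delta}^{(\omega_{\theta'})}_r|_H$ on $C^*_r(\Gamma)\rtimes_{\delta_r}C_0(\Gamma)$ via \eqref{eq:isom-tw-cross-c0-untw-cross-c0}.

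Next I would assemble these into a $\theta'$-varying family. Each diagonal coefficient $e^{i\theta'\omega_0(g,\cdot)}$ and the prefactor $e^{-i\theta'\phi(h)}$ depends pointwise continuously on $\theta'$, so $\theta'\mapsto u_h(\theta')$ is strictly continuous in $B(\ell^2(\Gamma))$. Amplified by $1_A$ and composed with the isomorphism from Lemma~\ref{lem:C-0-Gamma-crossed-prod-cont-field-triv-wo-Gamma-action}, this yields a multiplicative unitary family $(U_h)_{h\in H}$ of multipliers of $C(I)\otimes C_0(\Gamma)\ltimes_\alpha A$ whose adjoint action is precisely $\hat{\alpha}_\star|_H$, by direct comparison of \eqref{eq:Gamma-on-cont-field} with \eqref{eq:omega-delta-hat-on-untw-crossed-prod}. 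Taking the constant section $\theta'\mapsto u_h(\theta)$ instead gives a constant multiplicative family implementing the constant field of $\hat{\alpha}_{\omega_\theta}|_H$. Since both actions are inner with respect to multiplicative unitary families, each is exterior equivalent to the trivial $H$-action on $C(I)\otimes C_0(\Gamma)\ltimes_\alpha A$; composing the two cocycles produces an exterior equivalence between the two actions themselves, yielding the desired outer conjugacy.

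The main obstacle I foresee is the topological bookkeeping in the middle step: confirming that $(u_h(\theta'))_{\theta'}$ genuinely defines a unitary element of $M(C(I)\otimes C_0(\Gamma)\ltimes_\alpha A)$ for arbitrary $A$, rather than merely a fiberwise family. The argument hinges on the fact that the adjoint action of $v_h^{(\theta')}$ only moves the $C_0(\Gamma)$-factor of the crossed product, so strict continuity reduces to pointwise continuity of the scalar exponentials in $\ell^\infty(\Gamma)$, which is immediate and independent of $A$. Once that bookkeeping is in place, the outer conjugacy is a formal consequence of the composition of the two exterior equivalences with the trivial action.
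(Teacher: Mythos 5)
There is a genuine gap, and it sits exactly at the step you flagged as ``bookkeeping.'' Your strategy is to trivialize both actions: you take the unitaries $u_h(\theta') = e^{-i\theta'\phi(h)}v^{(\theta')}_h$ with $v^{(\theta')}_h$ as in \eqref{eq:v-k-dfn}, amplify by $1_A$, and claim they give multiplicative unitaries in $M(C(I)\otimes C_0(\Gamma)\ltimes_\alpha A)$ implementing $\hat{\alpha}_\star|_H$, so that each action is exterior equivalent to the trivial one. But $v^{(\theta')}_h$ contains the right-translation unitary $\rho_h = \sum_{h'}\lambda_{h'h^{-1}h'^{-1}}\delta_{h'}$, and $\rho_h$ (amplified) is a multiplier of $C_0(\Gamma)\ltimes_\alpha A$ only in special cases. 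It \emph{normalizes} the crossed product — which is why its adjoint action ``only moves the $C_0(\Gamma)$-factor'' — but normalizing is not multiplying into the algebra: in the presentation of Proposition~\ref{prop:cross-prod-C-0-Gamma-A-omega}, a leg-one matrix unit $e_{hk^{-1},h}$ occurs in $C_0(\Gamma)\ltimes_\alpha A$ only tensored with elements of the spectral subspace $A_{hk^{-1}h^{-1}}$, so $(\rho_k)_1(\delta_h)_1$ lies in the algebra only if that subspace is large enough. Remark~\ref{rmk:cocycle-untw-on-fin-subgr} works precisely because there $A=C^*_r(\Gamma)$, the crossed product is $\cptops(\ell^2(\Gamma))$, and $M(\cptops(\ell^2(\Gamma)))=B(\ell^2(\Gamma))\ni\rho_h$. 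For general $A$ your intermediate claim is actually false, not just unproved: take the trivial coaction on $A$, so that $C_0(\Gamma)\ltimes_\alpha A = C_0(\Gamma)\otimes A$ and, by \eqref{eq:omega-delta-hat-on-untw-crossed-prod} with only $g=e$ occurring, every $\hat{\alpha}_{\omega_\theta}$ is the translation action on the first factor; the restriction of translation to a finite subgroup $H$ is not exterior equivalent to the trivial $H$-action (already for $\Gamma=H$, $A=\C$ the crossed products $H\ltimes C(H)\simeq M_{|H|}(\C)$ and $C(H)\otimes C^*(H)$ differ, while exterior equivalent actions have isomorphic crossed products). So the continuity issue you worried about is not the obstacle; membership in the multiplier algebra is.

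The repair — and this is what the paper does — is to give up trivializing the actions and only perturb the twisted dual action into the \emph{untwisted} one. Keep only the $\ell^\infty(\Gamma)$-valued part of your unitaries, $(w_k)_{\theta'} = e^{-i\theta'\phi(k)}\sum_g \omega_{\theta'}(gk,k^{-1})\delta_g$, which genuinely lies in $M(C(I)\otimes C_0(\Gamma))\subset M(C(I)\otimes C_0(\Gamma)\ltimes_\alpha A)$ for every $A$. Using \eqref{eq:tilde-omega-0-is-cbd-alpha} and the cocycle identity one checks $w_k\,\hat{\alpha}_k(w_{k'}) = w_{kk'}$ for $k,k'\in H$, i.e.\ $(w_k)$ is a cocycle for the constant field of the untwisted dual action $\hat{\alpha}$, and $\Adj_{w_k}\circ\hat{\alpha}_k = (\hat{\alpha}_\star)_k$ by comparison with \eqref{eq:Gamma-on-cont-field}; this gives the case $\theta=0$. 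The same computation at a fixed parameter $\theta$ gives an exterior equivalence between $\hat{\alpha}|_H$ and $\hat{\alpha}_{\omega_\theta}|_H$ on $C_0(\Gamma)\ltimes_\alpha A$, and composing its constant field with the first equivalence yields the stated outer conjugacy. Your final composition step (transitivity of exterior equivalence) is fine; it is the two premises feeding into it that must be replaced as above.
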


\begin{proof}
We first prove the assertion for the case $\theta = 0$.  As in Remark~\ref{rmk:cocycle-untw-on-fin-subgr}, we can take a map $\phi$ from $H$ to $\R$ satisfying~\eqref{eq:tilde-omega-0-is-cbd-alpha}.  Now, consider the unitaries
\[
(w_k)_{\theta'} = e^{-i \theta' \phi(k)} \sum_{g} \omega^{\theta'}(g k, k^{-1}) \delta_{g} \quad (\theta' \in I)
\]
in $\MA(C(I) \otimes C_0(\Gamma))$ for $k \in H$.  This is a $\hat{\delta}_r$-cocycle.  Indeed, for any $\theta'$ we have
\begin{multline}
\label{eq:calc-w-k-cocycle-ident}
\left ( w_k (\hat{\delta}_r)_k(w_{k'}) \right )_{\theta'} \\
= e^{-i \theta' \phi(k)} \left ( \sum_{g} \omega^{\theta'}(g k, k^{-1}) \delta_{g} \right ) e^{-i \theta' \phi(k')} \left ( \sum_{g'} \omega^{\theta'}(g' k', k'^{-1}) \delta_{g' k^{-1}} \right ) \\
= e^{- i\theta'(\phi(k) + \phi(k'))} \sum_g \omega^{\theta'}(g k, k^{-1}) \omega^{\theta'}(g k k', k'^{-1}) \delta_g.
\end{multline}
Using~\eqref{eq:tilde-omega-0-is-cbd-alpha}, one sees that $e^{- i \theta'(\phi(k) + \phi(k'))}$ is equal to $e^{- \theta'\phi(k k')} \overline{\omega^{\theta'}(k'^{-1}, k^{-1})}$.  Applying~\eqref{eq:cocycle-ident-omega} for $g_0 = g k$, $g_1= k'^{-1}$, and $g_2 = k^{-1}$, we see that the right hand side of~\eqref{eq:calc-w-k-cocycle-ident} is equal to $(w_{k k'})_{\theta'}$.

If we regard $(w_k)_{k \in H}$ as elements of $\MA(C_0(\Gamma) \ltimes_{\alpha} A)$, they are $\hat{\alpha}_\star$-cocycle by definition of the dual (co)action.  We next see that they implement the conjugation between $\hat{\alpha}$ and $\hat{\alpha}_{\omega^\star}$.  Indeed, recalling that $\hat{\alpha}$ is the conjugation by $(\rho_g)_g$, we can compute
\begin{multline*}
\left ( \Adj_{(w_k)_1} \circ \hat{\alpha}_k \left ( \left ( \sum_g \lambda_g \alpha^{(g)}(x) \right )(\delta_h)_1 \right ) \right )_{\theta'} \\
= \Adj_{(v_k^{(\theta')})_1} \left ( \left (\sum_g \lambda_g \alpha^{(g)}(x) \right) (\delta_h)_1 \right) \\
= \sum_g \omega^{\theta'}(g h, k^{-1}) \overline{\omega^{\theta'}(h, k^{-1})} \lambda_{g} \delta_{h k^{-1}} \lambda_g \alpha^{(g)}(x)(\delta_h)_1
\end{multline*}
using the unitaries $(v_k^{(\theta')})_k$ defined in the same way as~\eqref{eq:v-k-dfn} but $\omega$ being replaced by $\omega^{\theta'}$.  By~\eqref{eq:Gamma-on-cont-field} and the cocycle identity for $\omega^{\theta'}$, the right hand side of the above formula is indeed equal to $\hat{\alpha}_\star$.  Thus we obtain the outer conjugacy of the actions of $H$ as in the assertion for the case $\theta = 0$.

For the general value of $\theta$, we can argue in the same way as above that the actions $\hat{\alpha}$ and $\hat{\alpha}_{\omega^\theta}$ are outer conjugate.  Thus we can compose the above outer conjugacy with the constant field of conjugacy between $\hat{\alpha}$ and $\hat{\alpha}_{\omega^\theta}$, which implies the assertion for the arbitrary value of $\theta$.
\end{proof}

We recall that the `left hand side' of the Baum--Connes conjecture with coefficients can be computed in the following way.

\begin{prop}[\cite{MR2608195}*{Proposition~1.6}]
\label{prop:BC-topol-side-cpt-reduction}
 Let $G$ be a second countable locally compact group, and $A$ and $B$ be $G$-C$^*$-algebras.  If $z \in \KK^G(A, B)$ induces an isomorphism $K^H_*(A) \rightarrow K^H_*(B)$ for any compact subgroup $H$ of $G$, the Kasparov product with $z$ induces an isomorphism from $K^\topol_*(G; A)$ to $K^\topol_*(G; B)$.
\end{prop}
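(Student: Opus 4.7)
The plan is to unfold the definition of the topological $K$-theory as a colimit over $G$-compact proper $G$-spaces and reduce the statement to the hypothesis on each compact subgroup $H$ by an induction argument.

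Recall that
\[
K^\topol_*(G; A) = \varinjlim_{X} \KK^G_*(C_0(X), A),
\]
where $X$ runs over the $G$-compact proper $G$-invariant subspaces of a universal example $\underline{E}G$ for proper actions, and the Kasparov product with $z$ gives a natural transformation $\KK^G_*(C_0(X), A) \to \KK^G_*(C_0(X), B)$. Since the colimit of isomorphisms is an isomorphism, it suffices to prove that, for each such $X$, the Kasparov product with $z$ gives an isomorphism $\KK^G_*(C_0(X), A) \to \KK^G_*(C_0(X), B)$. First I would reduce to the case where $X$ carries the structure of a finite proper $G$-CW-complex, using the fact (due to Abels and refined by L\"uck and others) that every $G$-compact proper $G$-space is $G$-equivariantly homotopy equivalent to such a complex, and that both functors $\KK^G_*(C_0(-), A)$ and $\KK^G_*(C_0(-), B)$ are $G$-homotopy invariant.

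Next I would argue by induction on the number of equivariant cells, the base case $X = \emptyset$ being trivial. If $X$ is obtained from $X'$ by attaching a single cell of the form $G \times_H D^n$ along $G \times_H S^{n-1}$ for some compact subgroup $H$ of $G$, then the cofibration gives a Mayer--Vietoris long exact sequence relating $\KK^G_*(C_0(X), -)$, $\KK^G_*(C_0(X'), -)$ and $\KK^G_*(C_0(G \times_H D^n), -)$, together with the analogous sequence involving $S^{n-1}$. The Kasparov product with $z$ induces a morphism between the two long exact sequences, so by the five lemma it is enough to establish the isomorphism for the spaces $G \times_H D^n$ and $G \times_H S^{n-1}$, i.e.\ for induced spaces of the form $G \times_H Y$ with $Y$ a finite $H$-CW-complex.

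For such spaces I would use the induction--restriction adjunction in equivariant $KK$-theory of Chabert--Echterhoff, which identifies
\[
\KK^G_*(C_0(G \times_H Y), C) \cong \KK^H_*(C_0(Y), C)
\]
naturally in the coefficient $C$. Under this isomorphism, the Kasparov product with $z$ on the left corresponds to the Kasparov product with $\Res^G_H(z) \in \KK^H(A, B)$ on the right. A second Mayer--Vietoris / five lemma reduction along the $H$-CW-structure of $Y$ (whose cells are of the form $H/H' \times D^n$ with $H' \subset H$ a closed, hence compact, subgroup) reduces us to proving that $\Res^H_{H'}(z)$ induces an isomorphism $\KK^{H'}_*(\mathbb{C}, A) \to \KK^{H'}_*(\mathbb{C}, B)$, i.e.\ $K^{H'}_*(A) \to K^{H'}_*(B)$, for every compact subgroup $H'$ of $G$. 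This is exactly the hypothesis on $z$, so we conclude.

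The main obstacle is organising the two nested five-lemma arguments cleanly and verifying that the induction isomorphism is natural with respect to the $z$-product. Once that naturality is in place, the whole argument is a formal d\'evissage, but writing out the Mayer--Vietoris sequences and the $G$-CW cell-attachment diagrams carefully is where the real work lies.
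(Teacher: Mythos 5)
Note first that the paper offers no proof of this proposition: it is quoted as Proposition~1.6 of \cite{MR2608195}, whose proof in turn rests on the ``going-down'' machinery of Chabert, Echterhoff and Oyono-Oyono. Your plan is exactly that machinery in outline --- write $K^\topol_*(G;A)$ as a colimit over $G$-compact pieces of $\underline{E}G$, d\'evissage by Mayer--Vietoris and the five lemma, and use the induction--restriction isomorphism $\KK^G_*(C_0(G\times_H Y),C)\simeq \KK^H_*(C_0(Y),C)$ to land on the hypothesis $K^{H'}_*(A)\simeq K^{H'}_*(B)$ for compact $H'$ --- so you are reproving the cited result along the standard lines, and the final reductions (compatibility of the $z$-product with the boundary maps via associativity of the Kasparov product, semisplitness of the relevant extensions of commutative algebras) are sound or easily supplied.

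There is, however, one genuine gap: the claim that every $G$-compact proper $G$-space is $G$-equivariantly homotopy equivalent to a finite proper $G$-CW-complex is false as stated. Already for trivial $G$ a compact metrizable space (a solenoid, the Hawaiian earring) need not have the homotopy type of any finite complex, and for a general second countable locally compact $G$ --- the generality in which the proposition is stated and used in \cite{MR2608195} --- there is no $G$-CW machinery to appeal to at all. The standard repairs are of two kinds. For discrete $G$ (the case actually needed in this paper) you do not need a homotopy equivalence, only cofinality: choose a $G$-CW model of $\underline{E}G$ and observe that every $G$-compact subset is contained in a $G$-finite subcomplex, so the colimit may be computed over $G$-finite proper $G$-CW complexes, after which your cell-attachment induction goes through verbatim. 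For general locally compact $G$ the literature avoids CW structures entirely: by Abels' slice theorem a $G$-compact proper $G$-space is covered by finitely many tubes $G\times_H S$ with $H$ compact, and one inducts on the number of tubes in the cover using Mayer--Vietoris, then applies the induction--restriction isomorphism exactly as you do. With that step replaced, your argument is the intended proof.
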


\begin{thm}
\label{thm:K-grp-isom-for-BC-coeff}
 Let $\Gamma$ be a discrete group satisfying the Baum--Connes conjecture with coefficients, $A$ be a $C^*_r(\Gamma)$-C$^*$-algebra, and $\omega_0$ be an $\R$-valued $2$-cocycle on $\Gamma$.  Then the K-groups $K_i(A_\omega)$ ($i = 0, 1$) of the deformed algebra $A_\omega$ are isomorphic to $K_i(A)$ for the cocycle $\omega = e^{i \omega_0}$.
\end{thm}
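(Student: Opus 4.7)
The strategy is to interpolate between $A$ and $A_\omega$ through the $C(I)$-algebra $A_{\omega_\star}$ constructed above, exploiting Proposition~\ref{prop:fin-subgrp-ext-equiv-on-A-C-Gamma-C-0} together with the Baum--Connes conjecture with coefficients. I would first reduce the claim to a $K$-theory comparison of two crossed products of a common algebra. Setting $B = C_0(\Gamma) \ltimes_\alpha A$, Corollary~\ref{cor:triv-param-case} combined with Corollary~\ref{cor:A-omega-st-Mor-equiv-iter-twist-cross-prod} (applied at the parameter values $\theta = 0$ and $\theta = 1$) yields strong Morita equivalences $A \sim \Gamma \ltimes_{\hat{\alpha}} B$ and $A_\omega \sim \Gamma \ltimes_{\hat{\alpha}_\omega} B$, where the $\Gamma$-actions are the specializations at $\theta = 0, 1$ of the family $(\hat{\alpha}_{\omega_\theta})_{\theta \in I}$ described by~\eqref{eq:tw-cross-prod-YD-alg-C-0-coact}. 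It therefore suffices to produce an isomorphism $K_*(\Gamma \ltimes_{\hat{\alpha}} B) \cong K_*(\Gamma \ltimes_{\hat{\alpha}_\omega} B)$.

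To obtain this I would assemble the family into a single $\Gamma$-action $\hat{\alpha}_\star$ on the constant $C(I)$-algebra $B_\star := C(I) \otimes B$; by Lemma~\ref{lem:C-0-Gamma-crossed-prod-cont-field-triv-wo-Gamma-action} this $\Gamma$-$C^*$-$C(I)$-algebra coincides with $C_0(\Gamma) \ltimes_{\alpha_{\omega_\star}} A_{\omega_\star}$, and the evaluation homomorphisms $\ev_\theta \colon B_\star \to B$ are $\Gamma$-equivariant, hence define classes in $\KK^\Gamma(B_\star, B)$. For every finite subgroup $H \leq \Gamma$ and every $\theta \in I$, Proposition~\ref{prop:fin-subgrp-ext-equiv-on-A-C-Gamma-C-0} shows that $\hat{\alpha}_\star|_H$ is outer conjugate, in a $C(I)$-linear way, to the constant field of $\hat{\alpha}_{\omega_\theta}|_H$. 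Since outer conjugate actions of the finite group $H$ have canonically isomorphic crossed products and hence, by Green--Julg, identical equivariant $K$-theory, under this outer conjugacy $\ev_\theta$ is converted into the endpoint evaluation of a constant field of $H$-$C^*$-algebras, which is an $H$-equivariant homotopy equivalence and therefore induces an isomorphism on $K^H_*$.

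With these $K^H_*$-isomorphisms established for all finite subgroups of $\Gamma$, Proposition~\ref{prop:BC-topol-side-cpt-reduction} upgrades $[\ev_\theta]$ to an isomorphism $K^\topol_*(\Gamma; B_\star) \cong K^\topol_*(\Gamma; B)$, and the Baum--Connes conjecture with coefficients for $\Gamma$ then translates this into an isomorphism $K_*(\Gamma \ltimes B_\star) \cong K_*(\Gamma \ltimes_{\hat{\alpha}_{\omega_\theta}} B)$. Applying this at $\theta = 0$ and $\theta = 1$ and chaining through the common left-hand side gives $K_*(A) \cong K_*(A_\omega)$. The step where I expect the most care is the verification in the second paragraph that the $C(I)$-linear outer conjugacy really converts $\ev_\theta$ into the endpoint evaluation of a constant $H$-equivariant field, so that the homotopy invariance of $K^H_*$ can be applied cleanly; once this point is secured, the remainder is a direct application of the Baum--Connes machinery and of the Morita reduction in the first paragraph.
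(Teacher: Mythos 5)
Your proposal is correct and follows essentially the same route as the paper's proof: interpolate through the constant field $C_0(\Gamma) \ltimes_{\alpha_{\omega_\star}} A_{\omega_\star}$ with the action $\hat{\alpha}_\star$, use Proposition~\ref{prop:fin-subgrp-ext-equiv-on-A-C-Gamma-C-0} and Green--Julg to handle finite subgroups, then apply Proposition~\ref{prop:BC-topol-side-cpt-reduction} and the Baum--Connes assumption. The only difference is presentational: you spell out the Takesaki--Takai/Morita reduction to the crossed products $\Gamma \ltimes_{\hat{\alpha}_{\omega_\theta}} B$ and the homotopy-invariance argument on $K^H_*$, which the paper leaves implicit.
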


\begin{proof}
It is enough to show that the evaluation map $\ev_\theta$ at $\theta \in I$ for the C$^*$-$C(I)$-algebra $\Gamma \ltimes_{\hat{\alpha}_{\omega^\star}} C_0(\Gamma) \ltimes_{\alpha_{\omega^\star}} A_{\omega^\star}$ induces an isomorphism in the K-theory for any $\theta$.

Proposition~\ref{prop:fin-subgrp-ext-equiv-on-A-C-Gamma-C-0} implies that, for any finite group $H$ of $\Gamma$, the $H$-homomorphism $\ev_\theta$ induces an isomorphism of the crossed products by $H$.  By the Green--Julg isomorphism $K^H_*(X) \simeq K_*(H \ltimes X)$ which holds for any $H$-C$^*$-algebra $X$, we obtain that $\ev_\theta$ induces an isomorphism on the $K^H$-groups.  By Proposition~\ref{prop:BC-topol-side-cpt-reduction}, $\ev_\theta$ induces an isomorphism
\[
K^\topol_*(\Gamma; C_0(\Gamma) \ltimes_{\alpha_{\omega^\star}} A_{\omega^\star}) \simeq K^\topol_*(\Gamma; C_0(\Gamma) \ltimes_{\alpha_{\omega^\theta}} A_{\omega^\theta}).
\]
By the assumption on $\Gamma$, the both hand sides are isomorphic to the K-groups of the crossed products by $\Gamma$.
\end{proof}

We have a slight variation of the above theorem for the groups satisfying the strong Baum--Connes conjecture.

\begin{thm}
\label{thm:KK-isom-for-strong-BC}
 Let $\Gamma$ be a discrete group satisfying the strong Baum--Connes conjecture, $A$ be a $C^*_r(\Gamma)$-C$^*$-algebra, and $\omega_0$ be an $\R$-valued $2$-cocycle on $\Gamma$.  Then the deformed algebra $A_\omega$ is $\KK$-equivalent to $A$ for the cocycle $\omega = e^{i \omega_0}$.
\end{thm}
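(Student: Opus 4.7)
The plan is to parallel the proof of Theorem~\ref{thm:K-grp-isom-for-BC-coeff}, replacing the $K$-theoretic conclusions at each stage by $KK$-theoretic ones. The strong Baum--Connes conjecture provides the critical upgrade of Proposition~\ref{prop:BC-topol-side-cpt-reduction}: for such $\Gamma$, any $z \in \KK^\Gamma(X, Y)$ that induces $K^H$-isomorphisms for all compact (here finite) subgroups $H \le \Gamma$ is itself a $\KK^\Gamma$-equivalence. This is the standard consequence of the Meyer--Nest categorical formulation of strong Baum--Connes, or equivalently of the existence of a $\gamma$-element equal to $1$.

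Concretely, I would work with the $\Gamma$-$C^*$-$C(I)$-algebra $C_0(\Gamma) \ltimes_{\alpha_{\omega_\star}} A_{\omega_\star}$ appearing in Lemma~\ref{lem:C-0-Gamma-crossed-prod-cont-field-triv-wo-Gamma-action}, whose underlying $C(I)$-algebra is the constant field with fiber $C_0(\Gamma) \ltimes_\alpha A$ while the $\Gamma$-action varies according to~\eqref{eq:Gamma-on-cont-field}. For each $\theta \in I$, the evaluation $\ev_\theta$ is a $\Gamma$-equivariant $*$-homomorphism. By Proposition~\ref{prop:fin-subgrp-ext-equiv-on-A-C-Gamma-C-0}, for every finite subgroup $H \le \Gamma$ the restriction of $\hat{\alpha}_\star$ to $H$ is outer conjugate to the constant field of $\hat{\alpha}_{\omega_\theta}$; the Green--Julg isomorphism then yields that $\ev_\theta$ induces a $K^H$-isomorphism. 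Invoking strong Baum--Connes promotes $\ev_\theta$ to a $\KK^\Gamma$-equivalence, and Kasparov descent yields a $\KK$-equivalence between the crossed product $\Gamma \ltimes_{\hat{\alpha}_{\omega_\star}} C_0(\Gamma) \ltimes_{\alpha_{\omega_\star}} A_{\omega_\star}$ and each of its fibers.

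Specialising to $\theta = 0$ and $\theta = 1$ produces a $\KK$-equivalence between $\Gamma \ltimes_{\hat{\alpha}} C_0(\Gamma) \ltimes_\alpha A$ and $\Gamma \ltimes_{\hat{\alpha}_\omega} C_0(\Gamma) \ltimes_{\alpha_\omega} A_\omega$. By the Takesaki--Takai duality each of these iterated crossed products is strongly Morita equivalent, hence $\KK$-equivalent, to the underlying algebra at the respective parameter; combined with Corollary~\ref{cor:triv-param-case} to identify $A_{\omega_0} \simeq A$, this delivers the desired $\KK$-equivalence $A \sim_{\KK} A_\omega$. The main obstacle is extracting from the strong Baum--Connes hypothesis the precise categorical statement that $K^H$-isomorphisms for all compact $H$ imply $\KK^\Gamma$-equivalence; once this reduction principle is in place, the remaining steps are direct adaptations of the proof of Theorem~\ref{thm:K-grp-isom-for-BC-coeff}.
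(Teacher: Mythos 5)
Your overall strategy --- the $C(I)$-field $C_0(\Gamma)\ltimes_{\alpha_{\omega_\star}} A_{\omega_\star}$, Proposition~\ref{prop:fin-subgrp-ext-equiv-on-A-C-Gamma-C-0} for finite subgroups, the Meyer--Nest formulation of the strong Baum--Connes conjecture, descent, and Takesaki--Takai duality --- is the same as the paper's, but the ``critical upgrade'' you lean on is not a true statement. It is not a consequence of the strong Baum--Connes conjecture that an element $z \in \KK^\Gamma(X, Y)$ inducing isomorphisms $K^H_*(X) \rightarrow K^H_*(Y)$ for all finite subgroups $H$ is a $\KK^\Gamma$-equivalence. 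In the Meyer--Nest framework the relevant notion is that of a \emph{weak equivalence}: $z$ must restrict to an \emph{invertible} element of $\KK^H$ for every compact subgroup $H$, and inducing a $K^H_*$-isomorphism does not give $\KK^H$-invertibility (already for trivial $H$, a $K_*$-isomorphism need not be a $\KK$-equivalence in the absence of bootstrap-type hypotheses). Consequently the route through the Green--Julg isomorphism, which only records $K^H_*$-information, cannot deliver the $\KK$-theoretic conclusion; the ``main obstacle'' you name at the end is a genuine one, and the reduction principle as you state it is false.

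The repair is to use Proposition~\ref{prop:fin-subgrp-ext-equiv-on-A-C-Gamma-C-0} at full strength rather than only its $K^H_*$-shadow: the outer conjugacy of $\hat{\alpha}_\star|_H$ with the constant field of $\hat{\alpha}_{\omega_\theta}|_H$ gives an $H$-equivariant (exterior-equivalence, hence Morita, hence $\KK^H$-) identification under which $\ev_\theta$ becomes the evaluation map of a constant field, an $H$-equivariant homotopy equivalence; therefore $\ev_\theta$ is invertible in $\KK^H$ for every finite $H$, i.e.\ it is a weak equivalence. This is exactly how the paper argues. One then does not even need your stronger assertion that weak equivalences are $\KK^\Gamma$-equivalences (true under the strong conjecture, since the weakly contractible objects vanish when $\KK^\Gamma$ is generated by the compactly induced algebras): the characterization used in the paper, that the descent functor $X \mapsto \Gamma \ltimes_r X$ maps weak equivalences to $\KK$-isomorphisms, already suffices. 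Your concluding steps --- descent, the Takesaki--Takai duality identifying the fibers up to strong Morita equivalence with $A_{\omega_\theta}$, and Corollary~\ref{cor:triv-param-case} at $\theta = 0$ --- are correct and agree with the paper.
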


\begin{proof}
Recall the following formulation of the strong Baum--Connes conjecture due to Meyer--Nest~\cite{MR2193334}.  The group $\Gamma$ satisfies the strong conjecture~\cite{MR2193334}*{Definition~9.1} if and only if the descent functor $\KK^\Gamma \rightarrow \KK, A \mapsto \Gamma \ltimes_r A$ maps weak equivalences to isomorphisms~\cite{MR2193334}*{p.~213}.
 
 The evaluation maps for the C$^*$-$C(I)$-algebra $C_0(\Gamma) \ltimes_{\alpha_{\omega^\star}} A_{\omega^\star}$ are weak equivalences by Proposition~\ref{prop:fin-subgrp-ext-equiv-on-A-C-Gamma-C-0}.  Thus, the reduced crossed products by $\Gamma$ are $\KK$-equivalent if $\Gamma$ satisfies the strong Baum--Connes conjecture.
\end{proof}

\begin{rmk}
  The assumption of Theorem~\ref{thm:KK-isom-for-strong-BC} is satisfied by the groups with the Haagerup property~\cite{MR1821144}.  Meanwhile, word hyperbolic groups satisfy the assumption of Theorem~\ref{thm:K-grp-isom-for-BC-coeff}~\cite{MR2874956}.
\end{rmk}

\begin{rmk}
  Suppose that $A$ is nuclear, the Fell bundle associated with $A$ has the approximation property, and that $\Gamma$ satisfies the strong Baum--Connes conjecture.  Then the continuous field $A_{\omega^\star}$ is an $\RKK$-fibration  in the sense of~\cite{MR2511635} by Proposition~\ref{prop:nucl-invar}, Theorem~\ref{thm:KK-isom-for-strong-BC}, and~\cite{MR2511635}*{Corollary~1.6}.
\end{rmk}

\subsection{Deformation of equivariant spectral triples}

In this section we shall see that the `equivariant Dirac operators' for a given coaction of $C^*_r(\Gamma)$ leads to an isospectral deformation on the $\omega$-deformations, and that it induces the same index map modulo the K-theory isomorphism of Theorem~\ref{thm:K-grp-isom-for-BC-coeff}.

As in Proposition~\ref{prop:rep-of-deformed-alg-from-covar-rep}, let $(A, H, X)$ be a covariant representation of a $C^*_r(\Gamma)$-C$^*$-algebra $A$ on $H$.  Suppose that $D$ is a (possibly unbounded) self-adjoint operator on $H$, and $\smooth{A}$ is a dense subalgebra of $A$ such that $a (1 + D^2)^{-1}$ is compact and $[D, a]$ is bounded for any $a \in \smooth{A}$.  Thus, $(\smooth{A}, D, H)$ is an odd spectral triple.  By abuse of notation, we let $\Id_{\ell^2(\Gamma)} \otimes D$ the closure of the operator $\xi \otimes \eta \mapsto \xi \otimes D \eta$ for $\xi \in \ell^2(\Gamma)$ and $\eta \in \dom(D)$.

We assume that $\Id_{\ell^2(\Gamma)} \otimes D$ commutes $X$ (in particular, $X$ preserves the domain of $(\Id_{\ell^2(\Gamma)} \otimes D)$) and one has $\alpha^{g}(a) \in \smooth{A}$ for any $a \in \smooth{A}$ and $g \in \Gamma$.  These conditions respectively correspond to the equivariance of the Dirac operator and the smoothness of the action.  We shall call such a spectral triple as a $C^*_r(\Gamma)$-\textit{equivariant spectral triple}.  By the equivariance of $D$, the operator $\Id_{\ell^2(\Gamma)} \otimes D$ restricts to $X^* (\delta_e \otimes H)$.  This restriction is unitarily equivalent to $D$.

Let $\smooth{A}_\fin$ denote the subalgebra of $\smooth{A}$ consisting of the elements with finite $\alpha$-spectrum.  Then the commutators of $\Id_{\ell^2(\Gamma)} \otimes D$ and $\sum_g \lambda^{(\omega)}_g \otimes \alpha^{(g)}(a) \in A'_\omega$ for $a \in \smooth{A}_\fin$ are bounded.  Thus, if we let $\smooth{A}_{\omega, \fin}$ denote the algebra generated by the $a^{(\omega)}$ for $a \in \smooth{A}_\fin$, we obtain a new spectral triple
\[
(\smooth{A}_{\omega, \fin}, \Id_{\ell^2(\Gamma)} \otimes D |_{X^* (\delta_e \otimes H)}, X^* (\delta_e \otimes H)),
\]
which is an isospectral deformation of the original triple.  By means of the unitary operator $X$ between $H$ and $X^* (\delta_e \otimes H)$, we consider this as a spectral triple represented on $H$, denoted by
\[
(\smooth{A}_{\omega, \fin}, D, H).
\]

If the original spectral triple $(A, D, H)$ is even, the above construction gives an even spectral triple over $\smooth{A}_{\omega, \fin}$ provided $X$ is compatible with the grading on $H$, that is $X \in C^*_r(\Gamma) \otimes B(H)^\even$.

\begin{example}
  Let $\ell$ be a word-length function on $\Gamma$.  Then the diagonal operator $D$ on $\ell^2(\Gamma)$ by the multiplication by $\ell$ gives an odd equivariant spectral triple over the $C^*_r(\Gamma)$-algebra $C^*_r(\Gamma)$.
\end{example}

Assume that $(A, D, H)$ is an even triple, and let $F = D \absolute{D}^{-1}$ be the phase of $D$.  The above construction of the deformed spectral triple give a Fredholm module $(F, H)$ over $A_\omega$, which is in $\KK_0(A_\omega, \C)$.  The next theorem shows that this element induces essentially the same map on the $K_0$-group if $\omega$ is a real $2$-cocycle.

\begin{thm}
\label{thm:char-spc-triple-invar}
 Suppose that $\Gamma$ satisfies the Baum--Connes conjecture with coefficients and let $\omega_0$ be an $\R$-valued $2$-cocycle on $\Gamma$.  Let $A$ be a $C^*_r(\Gamma)$-C$^*$-algebra admitting an equivariant even spectral triple $(H, D)$.  Then the even Fredholm module $(F, H)$ for $F = D \absolute{D}^{-1}$ induce the same map modulo the isomorphism given in Theorem~\ref{thm:K-grp-isom-for-BC-coeff}.
\end{thm}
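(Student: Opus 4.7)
The plan is to upgrade the isospectral deformation uniformly in the parameter $\theta \in I = [0,1]$ of the continuous field $A_{\omega_\star}$ introduced for Theorem~\ref{thm:K-grp-isom-for-BC-coeff}, building a single $\RKK(I;-,\C)$-class whose fibers give the Fredholm module over each $A_{\omega_\theta}$, and then transport this class via the $K$-theory isomorphisms established in the proof of that theorem.

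The first step is to construct a Fredholm module over the $C(I)$-algebra $A_{\omega_\star}$. Because the covariant representation $X$ of $C^*_r(\Gamma)$ on $H$ is a single unitary independent of $\theta$, and because $X$ commutes with $\Id_{\ell^2(\Gamma)} \otimes D$ by the equivariance hypothesis, the deformed representations of $A_{\omega_\theta}$ from Proposition~\ref{prop:rep-of-deformed-alg-from-covar-rep} assemble into a single representation of $A_{\omega_\star}$ on the Hilbert $C(I)$-module $\mathcal{H} = C(I) \otimes X^*(\delta_e \otimes H) \cong C(I) \otimes H$. The $C(I)$-linear operator $\mathcal{D} = \Id_{C(I)} \otimes D$ has fiber $D$ at every $\theta$, and the compactness of $a(1+\mathcal{D}^2)^{-1}$ and boundedness of $[\mathcal{D},a]$ for $a$ in the smooth dense subalgebra generated by $\{a^{(\omega_\star)} : a \in \smooth{A}_\fin\}$ reduce to the analogous assertions for $(\smooth{A}, D, H)$, exactly because the deformation is isospectral. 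The phase $\Id_{C(I)} \otimes F$ and the grading (extended trivially) therefore yield a class $[\mathcal{F}] \in \RKK(I; A_{\omega_\star}, \C)$ satisfying $\ev_\theta^*[\mathcal{F}] = [F]_{A_{\omega_\theta}}$ for every $\theta$.

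The second step is to push $[\mathcal{F}]$ to the iterated crossed product. The Morita equivalence of Corollary~\ref{cor:A-omega-st-Mor-equiv-iter-twist-cross-prod} is natural in the fiber and upgrades to a Morita equivalence between the $C(I)$-algebras $A_{\omega_\star}$ and $\Gamma \ltimes_{\hat{\alpha}_{\omega_\star}} C_0(\Gamma) \ltimes_{\alpha_{\omega_\star}} A_{\omega_\star}$; composing with Kasparov descent along the coaction produces a class $[\widetilde{\mathcal{F}}] \in \RKK(I; \Gamma \ltimes_{\hat{\alpha}_{\omega_\star}} C_0(\Gamma) \ltimes_{\alpha_{\omega_\star}} A_{\omega_\star}, \C)$ whose fiber at $\theta$ pairs on $K$-theory exactly as the image of $[F]_{A_{\omega_\theta}}$ under the Morita identification $K_0(A_{\omega_\theta}) \cong K_0(\Gamma \ltimes C_0(\Gamma) \ltimes A_{\omega_\theta})$.

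The final step invokes the Baum--Connes machinery used in the proof of Theorem~\ref{thm:K-grp-isom-for-BC-coeff}: for every $\theta$ the evaluation $\ev_\theta$ on $\Gamma \ltimes_{\hat{\alpha}_{\omega_\star}} C_0(\Gamma) \ltimes_{\alpha_{\omega_\star}} A_{\omega_\star}$ induces a $K$-theory isomorphism. By naturality of the Kasparov pairing with $[\widetilde{\mathcal{F}}]$, the diagram
\begin{equation*}
\begin{CD}
K_0\bigl(\Gamma \ltimes C_0(\Gamma) \ltimes A_{\omega_\star}\bigr) @>{\langle [\widetilde{\mathcal{F}}], \cdot \rangle}>> \Z \\
@VV{\ev_\theta}V @| \\
K_0(A_{\omega_\theta}) @>{\langle [F]_{A_{\omega_\theta}}, \cdot \rangle}>> \Z
\end{CD}
\end{equation*}
commutes, and since the left vertical arrow is an isomorphism for every $\theta$, the two index maps at $\theta=0$ and $\theta=1$ coincide under the identification of Theorem~\ref{thm:K-grp-isom-for-BC-coeff}. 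The principal technical obstacle is Step~1, namely verifying that $\mathcal{D}$ genuinely defines a spectral triple over $A_{\omega_\star}$ with sufficient regularity to pass to the $\RKK$-class; this follows the template of Connes--Landi isospectral deformations and is essentially formal once one works on the dense subalgebra generated by the elements $\sum_g \lambda^{(\omega_\star)}_g \otimes \alpha^{(g)}(a)$ for $a \in \smooth{A}_\fin$.
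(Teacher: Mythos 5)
Your proposal is correct and follows essentially the same route as the paper: one assembles the isospectral deformations into a single class in $\RKK(I; A_{\omega_\star}, C(I))$ on the constant field of Hilbert spaces $X^*(\delta_e \otimes H)$ with operator $F$, whose fiber at each $\theta$ is the Fredholm module over $A_{\omega_\theta}$, and then uses that the isomorphisms of Theorem~\ref{thm:K-grp-isom-for-BC-coeff} are induced by the evaluation maps of the $C(I)$-algebra, so the pairings are intertwined by naturality. Your intermediate transfer to the iterated crossed product via Corollary~\ref{cor:A-omega-st-Mor-equiv-iter-twist-cross-prod} merely makes explicit what the paper asserts directly (that the evaluation maps of $A'_{\omega_\star}$ themselves induce the $K$-theory isomorphisms), so it is an elaboration rather than a different argument.
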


\begin{proof}
 The isomorphisms of the K-groups are induced by the evaluation maps of the C$^*$-$C(I)$-algebra $A'_{\omega^\star}$.
 
 The algebra $A'_{\omega^\star}$ acts on the field of Hilbert space $X(\delta_e \otimes H) \otimes L^2(I)$ over $I$, and its elements have the bounded commutator with the self-adjoint operator
 \[
 \left ( \Id_{\ell^2(\Gamma)} \otimes F|_{X(\delta_e \otimes H)} \right ) \otimes \Id_{L^2(I)}.
 \]
 This operator defines an element of $\RKK(I; A'_{\omega^\star}, C(I))$.  It is clear from the construction that, if we specialize this element to a point $\theta \in I$, we obtain the Fredholm module $(F, H)$ on $A_{\omega^\theta}$.
\end{proof}

\begin{rmk}
There is a corresponding statement for the odd equivariant spectral triples.  It can be proved in the same way, or can be reduced to the even case by taking the graded tensor product with the standard odd spectral triple over $C^{\infty}(S^1)$ of multiplicity $2$.
\end{rmk}

\section{Concluding remarks}

\begin{rmk}
Suppose that $G$ is a compact group, $\omega$ is a $2$-cocycle on the dual $\hat{G}$ of $G$.   Wassermann~\cite{MR990110} defined a deformation $C(G)_\omega$ of $C(G)$ as in~\cite{MR990110}, endowed with the action $\lambda^\omega$ of $G$. When $G$ is commutative, this construction can be identified with ours.  More generally, we can generalize this construction to arbitrary $2$-cocycles over discrete quantum groups.

When $A$ is a $G$-C$^*$-algebra, we can define its deformation by $A_\omega = (A \otimes C(G)_\omega)^G$.  We may expect similar phenomenons in this context too, but we lack nontrivial examples in this context.  For example, there is a good reason to believe that the $\U(1)$-valued $2$-cocycles on the duals of semisimple compact Lie groups which can be smoothly perturbed to the trivial one are always induced from the dual of the maximal torus~\citelist{\cite{MR996457}\cite{MR2844801}}.  In general, suppose that $H$ is a subgroup of $G$ and $\omega$ is a cocycle in $L(H) \otimes L(H) \subset L(G) \otimes L(G)$.  Then we have the natural identification $C(G)_\omega = \Ind^G_H C(H)_\omega$ which leads to $A_\omega \simeq (\Res^G_H A)_\omega$ for any $G$-C$^*$-algebra $A$.  Hence we can reduce the computation to $\hat{H}$ which is an ordinary discrete group if $H$ is commutative.  We note that a recent work of Kasprzak~\cite{MR2736961} handles this situation.
\end{rmk}

\begin{rmk}
 The compact quantum groups $C^*_r(\Gamma)$ can be characterized as the commutative ones among the general compact quantum groups.  The arguments in Section~\ref{sec:K-thry-isom-omega-deformations} depend on this commutativity in the following way.  If $G$ is a compact group as above and $A$ is a C$^*$-algebra endowed with an action $\alpha$ of $G$, we can define the deformation of $A$ by taking the fixed point algebra $(A \otimes C(G)_\omega)^{\alpha \otimes \lambda^\omega}$.  When $G$ is commutative, this algebra is invariant under $\alpha$ (or $\lambda_\omega$) by
 \[
 \alpha_g \otimes \iota \circ \alpha_h \otimes \lambda^\omega_h = \alpha_{g h} \otimes \lambda^\omega_h = \alpha_{h g} \otimes \lambda^{\omega}_h = \alpha_h \otimes \lambda^\omega_h \circ \alpha_g \otimes \iota .
 \]
 Then we can take the crossed product $G \ltimes_\alpha (A \otimes C(G)_\omega)^{\alpha \otimes \lambda^\omega}$, which is isomorphic to the corresponding algebra for the case $\omega = 1$.  This way we can reduce the problem of $(A \otimes C(G)_\omega)^{\alpha \otimes \lambda^\omega}$ to the corresponding one for the actions of $\hat{G}$ on $G \ltimes_\alpha (A \otimes C(G))^{\alpha \otimes \lambda}$.
\end{rmk}

\begin{rmk}
 For a noncommutative compact quantum group $G$, one may consider another form of deformation of the function algebra with respect to a $2$-cocycle on the dual discrete quantum group.  Namely, if $\hat{\delta}$ is the coproduct of $C^* G$ and $\omega$ is a $2$-cocycle, $\delta_\omega = \omega \hat{\delta} \omega^{-1}$ defines another coproduct on $C^* G$.  Thus the dual Hopf algebra $H_\omega$ of $(C^* G, \delta_\omega)$ can be regarded as a deformation of $C(G)$.  Moreover, the cocycle condition for $\omega$ can be relaxed to the twist condition for some associator $\Phi$.  A result of Neshveyev--Tuset~\cite{MR2861394} for $q$-deformations of simply connected simple compact Lie groups suggests that  the K-theory of $H_\omega$ do not change if $\omega$ and the associator $\Phi_\omega$ vary continuously in an appropriate sense.
\end{rmk}

\begin{rmk}
After this work was circulated as a preprint, the K-theoretic invariance under continuous $\omega$-deformation was generalized to the setting of coactions of locally compact groups and Borel cocycles in \cite{MR3067794}.
\end{rmk}

\appendix
\section*{Appendix}

This appendix provides another proof of Proposition~\ref{prop:nucl-invar}.  The argument follows the methodology of~\cite{MR1763912}.  The author thanks Narutaka Ozawa for kindly explaining this proof.

Let $\bdl{B} = (B_g)_{g \in \Gamma}$ be a Fell bundle over $\Gamma$, and put $B = C^*_r(\bdl{B})$.  For each $g \in \Gamma$, the inclusion $j_g\colon B_g \rightarrow B$ extends to an adjointable morphism of right Hilbert $B_e$-modules.  Using this, for $x \in B$ and $g \in \Gamma$, the $g$-component $x(g)$ of $x$ is characterized as the unique element $b \in B_g$ satisfying $j_g^* x j_e a = b a$ for $a \in B_e$~\cite{MR1488064}*{Proposition~2.6}.  From this form, one sees that the map
\[
E_g\colon B \rightarrow B_g, \quad x \mapsto \hat{x}(g)
\]
is completely contractive if it is regarded as a linear transformation on $B$.

Suppose that $\bdl{B}$ has the approximation property with respect to a net $(a_i)_{i \in I}$ satisfying~\eqref{eq:approx-seq-bddness} and~\eqref{eq:approx-seq-approximation}.  We may suppose that the support $X_i$ of $a_i$ is a finite subset of $\Gamma$ for each $i \in I$~\cite{MR1488064}*{Proposition~4.5}.  In the following we assume that each $X_i$ is a finite subset of $\Gamma$ containing $e$.

For each $i \in I$, the map
\[
\Phi_i\colon x \mapsto \sum_{g, h \in \Gamma} a_i(h g)^* E_h(x) a_i(g)
\]
is a completely positive map on $B$ whose image is contained in $\oplus_{g \in X_i} B_g$.  By the summation process condition~\eqref{eq:approx-seq-approximation}, we have the pointwise convergence $\Phi_i \rightarrow \Id_B$.

\begin{prop}
Suppose that $\bdl{B}$ has the approximation property.  If $B_e$ is nuclear, then the C$^*$-algebra $B$ is also nuclear.
\end{prop}

\begin{proof}
In order to avoid confusion we write $\maxten$ for the maximal tensor product and $\mintensor$ for the minimal product in this proof.  It is enough to show that $B \maxten C$ agrees with $B \mintensor C$ for an arbitrary C$^*$-algebra $C$.

First, the complete positive map $\Phi_i$ induces the maps
\begin{align*}
\Phi_i \maxten \Id_C & \colon B \maxten C \rightarrow B \maxten C,\\
\Phi_i \mintensor \Id_C & \colon B \mintensor C \rightarrow B \mintensor C
\end{align*}
on the maximal and minimal tensor products~\cite{MR2391387}*{Theorem~3.5.3}.  Since the net $\Phi_i \maxten \Id_C$ converges to $\Id_{B \maxten C}$, it is enough to show that each $\Phi_i \maxten \Id_C$ factors through the canonical quotient map $Q \colon B \maxten C \rightarrow B \mintensor C$.

Let $\bimod{E}_i$ the right Hilbert $B_e$-module completion of $\oplus_{g \in X_i} B_g$.  Using the assumption $e \in X_i$, we obtain that $\cptops(\bimod{E}_i)$ is strongly Morita equivalent to $(\bimod{E}_i, \bimod{E}_i)_{B_e} = B_e$.  Since the nuclearity is preserved under strong Morita equivalence~(see \cite{MR2276659} and references therein), $\cptops(\bimod{E}_i)$ is also nuclear.  

Now, the formula $x \mapsto (\oplus_{g \in I} j_g)^* x j_e$ defines the map
\[
\Psi_i \colon B \rightarrow \oplus_{g \in X_i} B_g, \quad b \mapsto \oplus_{g \in X_i} E_g(b),
\]
which can be considered as a linear map from $B$ to $\cptops(\bimod{E}_i)$.  By the spatial implementation, we know that it is completely bounded.  Since the minimal tensor product is functorial for completely bounded maps~\cite{MR2391387}*{Remark~3.5.4}, we obtain a map
\[
\Psi_i \mintensor \Id_C \colon B \mintensor C \rightarrow \cptops(\bimod{E}_i) \mintensor C.
\]

Next, since the submodules $(B_g)_{g \in X_i}$ of $\ell_2 (\bdl{B})$ are mutually orthogonal, the map
\[
\Xi_i\colon \cptops(\bimod{E}_i) \rightarrow B, \quad T \mapsto \bigl ( \oplus_{g \in X_i} j_g \bigr ) T ( \oplus_{g \in X_i} j_g \bigr )^*
\]
is completely positive.  Thus, we obtain a map
\[
\Xi_i \maxten \Id_C \colon  \cptops(\bimod{E}_i) \maxten C \rightarrow B \maxten C.
\]
By the nuclearity of $\cptops(\bimod{E}_i)$, we have a natural isomorphism between $\cptops(\bimod{E}_i) \mintensor C$ and $\cptops(\bimod{E}_i) \maxten C$.  Modulo this isomorphism, we have
\[
\Phi_i \maxten \Id_C = (\Xi_i \maxten \Id_C) \circ (\Psi_i \mintensor \Id_C) \circ (\Phi_i \mintensor \Id_C) \circ Q,
\]
which proves the assertion.
\end{proof}

\begin{bibdiv}
\begin{biblist}

\bib{MR2276659}{article}{
      author={An~Huef, Astrid},
      author={Raeburn, Iain},
      author={Williams, Dana~P.},
       title={Properties preserved under {M}orita equivalence of
  {$C^*$}-algebras},
        date={2007},
        ISSN={0002-9939},
     journal={Proc. Amer. Math. Soc.},
      volume={135},
      number={5},
       pages={1495\ndash 1503 (electronic)},
         url={http://dx.doi.org/10.1090/S0002-9939-06-08625-4},
         doi={10.1090/S0002-9939-06-08625-4},
      review={\MR{2276659 (2008d:46071)}},
}

\bib{MR1926869}{article}{
      author={Anantharaman-Delaroche, Claire},
       title={Amenability and exactness for dynamical systems and their
  {$C^\ast$}-algebras},
        date={2002},
        ISSN={0002-9947},
     journal={Trans. Amer. Math. Soc.},
      volume={354},
      number={10},
       pages={4153\ndash 4178 (electronic)},
         url={http://dx.doi.org/10.1090/S0002-9947-02-02978-1},
         doi={10.1090/S0002-9947-02-02978-1},
      review={\MR{1926869 (2004e:46082)}},
}

\bib{MR1856986}{article}{
      author={Abadie, Beatriz},
      author={Exel, Ruy},
       title={Deformation quantization via {F}ell bundles},
        date={2001},
        ISSN={0025-5521},
     journal={Math. Scand.},
      volume={89},
      number={1},
       pages={135\ndash 160},
      review={\MR{1856986 (2002g:46118)}},
}

\bib{MR3067794}{article}{
   author={Bhowmick, Jyotishman},
   author={Neshveyev, Sergey},
   author={Sangha, Amandip},
   title={Deformation of operator algebras by Borel cocycles},
   journal={J. Funct. Anal.},
   volume={265},
   date={2013},
   number={6},
   pages={983--1001},
   issn={0022-1236},
   review={\MR{3067794}},
   doi={10.1016/j.jfa.2013.05.021},
}

\bib{MR2391387}{book}{
      author={Brown, Nathanial~P.},
      author={Ozawa, Narutaka},
       title={{$C^*$}-algebras and finite-dimensional approximations},
      series={Graduate Studies in Mathematics},
   publisher={American Mathematical Society},
     address={Providence, RI},
        date={2008},
      volume={88},
        ISBN={978-0-8218-4381-9; 0-8218-4381-8},
      review={\MR{MR2391387 (2009h:46101)}},
}

\bib{MR823176}{article}{
      author={Connes, Alain},
       title={Noncommutative differential geometry},
        date={1985},
        ISSN={0073-8301},
     journal={Inst. Hautes \'Etudes Sci. Publ. Math.},
      volume={62},
       pages={257\ndash 360},
      review={\MR{MR823176 (87i:58162)}},
}

\bib{MR2664313}{article}{
      author={De~Rijdt, An},
      author={Vander~Vennet, Nikolas},
       title={Actions of monoidally equivalent compact quantum groups and
  applications to probabilistic boundaries},
        date={2010},
        ISSN={0373-0956},
     journal={Ann. Inst. Fourier (Grenoble)},
      volume={60},
      number={1},
       pages={169\ndash 216},
         url={http://aif.cedram.org/item?id=AIF_2010__60_1_169_0},
      review={\MR{2664313 (2011g:46128)}},
}

\bib{MR2608195}{article}{
      author={Echterhoff, Siegfried},
      author={L{\"u}ck, Wolfgang},
      author={Phillips, N.~Christopher},
      author={Walters, Samuel},
       title={The structure of crossed products of irrational rotation algebras
  by finite subgroups of {${\rm SL}_2(\mathbb{Z})$}},
        date={2010},
        ISSN={0075-4102},
     journal={J. Reine Angew. Math.},
      volume={639},
       pages={173\ndash 221},
      eprint={\href{http://arxiv.org/abs/math/0609784}{{\tt arXiv:math/0609784}}},
         url={http://dx.doi.org/10.1515/CRELLE.2010.015},
         doi={10.1515/CRELLE.2010.015},
      review={\MR{2608195 (2011c:46127)}},
}

\bib{MR2511635}{article}{
      author={Echterhoff, Siegfried},
      author={Nest, Ryszard},
      author={Oyono-Oyono, Herv{\'e}},
       title={Fibrations with noncommutative fibers},
        date={2009},
        ISSN={1661-6952},
     journal={J. Noncommut. Geom.},
      volume={3},
      number={3},
       pages={377\ndash 417},
      eprint={\href{http://arxiv.org/abs/0810.0118}{{\tt arXiv:0810.0118 [math.KT]}}},
         url={http://dx.doi.org/10.4171/JNCG/41},
         doi={10.4171/JNCG/41},
      review={\MR{2511635 (2010g:19004)}},
}

\bib{MR1895615}{article}{
      author={Echterhoff, Siegfried},
      author={Quigg, John},
       title={Full duality for coactions of discrete groups},
        date={2002},
        ISSN={0025-5521},
     journal={Math. Scand.},
      volume={90},
      number={2},
       pages={267\ndash 288},
      review={\MR{1895615 (2003g:46079)}},
}

\bib{MR1488064}{article}{
      author={Exel, Ruy},
       title={Amenability for {F}ell bundles},
        date={1997},
        ISSN={0075-4102},
     journal={J. Reine Angew. Math.},
      volume={492},
       pages={41\ndash 73},
         url={http://dx.doi.org/10.1515/crll.1997.492.41},
         doi={10.1515/crll.1997.492.41},
      review={\MR{1488064 (99a:46131)}},
}

\bib{MR1821144}{article}{
      author={Higson, Nigel},
      author={Kasparov, Gennadi},
       title={{$E$}-theory and {$KK$}-theory for groups which act properly and
  isometrically on {H}ilbert space},
        date={2001},
        ISSN={0020-9910},
     journal={Invent. Math.},
      volume={144},
      number={1},
       pages={23\ndash 74},
         url={http://dx.doi.org/10.1007/s002220000118},
         doi={10.1007/s002220000118},
      review={\MR{1821144 (2002k:19005)}},
}

\bib{MR2736961}{article}{
      author={Kasprzak, P.},
       title={Rieffel deformation of group coactions},
        date={2010},
        ISSN={0010-3616},
     journal={Comm. Math. Phys.},
      volume={300},
      number={3},
       pages={741\ndash 763},
         url={http://dx.doi.org/10.1007/s00220-010-1093-9},
         doi={10.1007/s00220-010-1093-9},
      review={\MR{2736961}},
}

\bib{MR1914617}{article}{
      author={Lafforgue, Vincent},
       title={{$K$}-th\'eorie bivariante pour les alg\`ebres de {B}anach et
  conjecture de {B}aum-{C}onnes},
        date={2002},
        ISSN={0020-9910},
     journal={Invent. Math.},
      volume={149},
      number={1},
       pages={1\ndash 95},
         url={http://dx.doi.org/10.1007/s002220200213},
         doi={10.1007/s002220200213},
      review={\MR{1914617 (2003d:19008)}},
}

\bib{MR2874956}{article}{
      author={Lafforgue, Vincent},
       title={La conjecture de {B}aum-{C}onnes \`a coefficients pour les
  groupes hyperboliques},
        date={2012},
        ISSN={1661-6952},
     journal={J. Noncommut. Geom.},
      volume={6},
      number={1},
       pages={1\ndash 197},
      eprint={\href{http://arxiv.org/abs/1201.4653}{{\tt arXiv:1201.4653
  [math.OA]}}},
         url={http://dx.doi.org/10.4171/JNCG/89},
         doi={10.4171/JNCG/89},
      review={\MR{2874956}},
}

\bib{MR2218025}{incollection}{
      author={Mathai, Varghese},
       title={Heat kernels and the range of the trace on completions of twisted
  group algebras},
        date={2006},
   booktitle={The ubiquitous heat kernel},
      series={Contemp. Math.},
      volume={398},
   publisher={Amer. Math. Soc.},
     address={Providence, RI},
       pages={321\ndash 345},
        note={With an appendix by Indira Chatterji},
      review={\MR{2218025 (2007c:58034)}},
}

\bib{MR2193334}{article}{
      author={Meyer, Ralf},
      author={Nest, Ryszard},
       title={The {B}aum-{C}onnes conjecture via localisation of categories},
        date={2006},
        ISSN={0040-9383},
     journal={Topology},
      volume={45},
      number={2},
       pages={209\ndash 259},
         url={http://dx.doi.org/10.1016/j.top.2005.07.001},
         doi={10.1016/j.top.2005.07.001},
      review={\MR{2193334 (2006k:19013)}},
}

\bib{MR2844801}{article}{
      author={Neshveyev, Sergey},
      author={Tuset, Lars},
       title={On second cohomology of duals of compact groups},
        date={2011},
        ISSN={0129-167X},
     journal={Internat. J. Math.},
      volume={22},
      number={9},
       pages={1231\ndash 1260},
      eprint={\href{http://arxiv.org/abs/1011.4569}{{\tt arXiv:1011.4569
  [math.OA]}}},
         url={http://dx.doi.org/10.1142/S0129167X11007239},
         doi={10.1142/S0129167X11007239},
      review={\MR{2844801 (2012k:22010)}},
}

\bib{MR2861394}{article}{
      author={Neshveyev, Sergey},
      author={Tuset, Lars},
       title={{$K$}-homology class of the {D}irac operator on a compact quantum
  group},
        date={2011},
        ISSN={1431-0635},
     journal={Doc. Math.},
      volume={16},
       pages={767\ndash 780},
      eprint={\href{http://arxiv.org/abs/1102.0248}{{\tt arXiv:1102.0248
  [math.OA]}}},
      review={\MR{2861394}},
}

\bib{MR2914062}{article}{
      author={Neshveyev, Sergey},
      author={Tuset, Lars},
       title={Quantized algebras of functions on homogeneous spaces with
  {P}oisson stabilizers},
        date={2012},
        ISSN={0010-3616},
     journal={Comm. Math. Phys.},
      volume={312},
      number={1},
       pages={223\ndash 250},
      eprint={\href{http://arxiv.org/abs/1103.4346}{{\tt arXiv:1103.4346
  [math.OA]}}},
         url={http://dx.doi.org/10.1007/s00220-012-1455-6},
         doi={10.1007/s00220-012-1455-6},
      review={\MR{2914062}},
}

\bib{MR2566309}{article}{
      author={Nest, Ryszard},
      author={Voigt, Christian},
       title={Equivariant {P}oincar\'e duality for quantum group actions},
        date={2010},
        ISSN={0022-1236},
     journal={J. Funct. Anal.},
      volume={258},
      number={5},
       pages={1466\ndash 1503},
      eprint={\href{http://arxiv.org/abs/0902.3987}{{\tt arXiv:0902.3987
  [math.KT]}}},
         url={http://dx.doi.org/10.1016/j.jfa.2009.10.015},
         doi={10.1016/j.jfa.2009.10.015},
      review={\MR{2566309}},
}

\bib{MR1763912}{article}{
      author={Ozawa, Narutaka},
       title={Amenable actions and exactness for discrete groups},
        date={2000},
        ISSN={0764-4442},
     journal={C. R. Acad. Sci. Paris S\'er. I Math.},
      volume={330},
      number={8},
       pages={691\ndash 695},
         url={http://dx.doi.org/10.1016/S0764-4442(00)00248-2},
         doi={10.1016/S0764-4442(00)00248-2},
      review={\MR{1763912 (2001g:22007)}},
}

\bib{MR1002543}{article}{
      author={Packer, Judith~A.},
      author={Raeburn, Iain},
       title={Twisted crossed products of {$C^*$}-algebras},
        date={1989},
        ISSN={0305-0041},
     journal={Math. Proc. Cambridge Philos. Soc.},
      volume={106},
      number={2},
       pages={293\ndash 311},
      review={\MR{1002543 (90g:46097)}},
}

\bib{MR1066817}{article}{
      author={Packer, Judith~A.},
      author={Raeburn, Iain},
       title={Twisted crossed products of {$C^*$}-algebras. {II}},
        date={1990},
        ISSN={0025-5831},
     journal={Math. Ann.},
      volume={287},
      number={4},
       pages={595\ndash 612},
         url={http://dx.doi.org/10.1007/BF01446916},
         doi={10.1007/BF01446916},
      review={\MR{1066817 (92b:46106)}},
}

\bib{MR587369}{article}{
      author={Pimsner, M.},
      author={Voiculescu, D.},
       title={Exact sequences for {$K$}-groups and {E}xt-groups of certain
  cross-product {$C^{\ast} $}-algebras},
        date={1980},
        ISSN={0379-4024},
     journal={J. Operator Theory},
      volume={4},
      number={1},
       pages={93\ndash 118},
      review={\MR{MR587369 (82c:46074)}},
}

\bib{MR990592}{article}{
      author={Rieffel, Marc~A.},
       title={Continuous fields of {$C^*$}-algebras coming from group cocycles
  and actions},
        date={1989},
        ISSN={0025-5831},
     journal={Math. Ann.},
      volume={283},
      number={4},
       pages={631\ndash 643},
         url={http://dx.doi.org/10.1007/BF01442857},
         doi={10.1007/BF01442857},
      review={\MR{990592 (90b:46120)}},
}

\bib{MR1002830}{article}{
      author={Rieffel, Marc~A.},
       title={Deformation quantization of {H}eisenberg manifolds},
        date={1989},
        ISSN={0010-3616},
     journal={Comm. Math. Phys.},
      volume={122},
      number={4},
       pages={531\ndash 562},
         url={http://projecteuclid.org/getRecord?id=euclid.cmp/1104178588},
      review={\MR{1002830 (90e:46060)}},
}

\bib{MR1237992}{article}{
      author={Rieffel, Marc~A.},
       title={{$K$}-groups of {$C^*$}-algebras deformed by actions of {${\bf
  R}^d$}},
        date={1993},
        ISSN={0022-1236},
     journal={J. Funct. Anal.},
      volume={116},
      number={1},
       pages={199\ndash 214},
         url={http://dx.doi.org/10.1006/jfan.1993.1110},
         doi={10.1006/jfan.1993.1110},
      review={\MR{MR1237992 (94i:46088)}},
}

\bib{MR2182592}{article}{
      author={Vaes, Stefaan},
       title={A new approach to induction and imprimitivity results},
        date={2005},
        ISSN={0022-1236},
     journal={J. Funct. Anal.},
      volume={229},
      number={2},
       pages={317\ndash 374},
      eprint={\href{http://arxiv.org/abs/math/0407525}{{\tt arXiv:math/0407525
  [math.OA]}}},
         doi={10.1016/j.jfa.2004.11.016},
      review={\MR{2182592 (2007f:46065)}},
}

\bib{MR996457}{incollection}{
      author={Wassermann, Antony},
       title={Coactions and {Y}ang-{B}axter equations for ergodic actions and
  subfactors},
        date={1988},
   booktitle={Operator algebras and applications, {V}ol.\ 2},
      series={London Math. Soc. Lecture Note Ser.},
      volume={136},
   publisher={Cambridge Univ. Press},
     address={Cambridge},
       pages={203\ndash 236},
      review={\MR{MR996457 (92d:46167)}},
}

\bib{MR990110}{article}{
      author={Wassermann, Antony},
       title={Ergodic actions of compact groups on operator algebras. {II}.
  {C}lassification of full multiplicity ergodic actions},
        date={1988},
        ISSN={0008-414X},
     journal={Canad. J. Math.},
      volume={40},
      number={6},
       pages={1482\ndash 1527},
      review={\MR{MR990110 (92d:46168)}},
}

\bib{MR2738561}{article}{
      author={Yamashita, Makoto},
       title={Connes-{L}andi deformation of spectral triples},
        date={2010},
        ISSN={0377-9017},
     journal={Lett. Math. Phys.},
      volume={94},
      number={3},
       pages={263\ndash 291},
      eprint={\href{http://arxiv.org/abs/1006.4420}{{\tt arXiv:1006.4420
  [math.OA]}}},
      review={\MR{2738561}},
}

\bib{MR0241994}{article}{
      author={Zeller-Meier, G.},
       title={Produits crois\'es d'une {$C^{\ast} $}-alg\`ebre par un groupe
  d'automorphismes},
        date={1968},
        ISSN={0021-7824},
     journal={J. Math. Pures Appl. (9)},
      volume={47},
       pages={101\ndash 239},
      review={\MR{0241994 (39 \#3329)}},
}

\end{biblist}
\end{bibdiv}

\end{document}